\newtheorem{thm}{Theorem}
\newtheorem{prop}[thm]{Proposition}
\newtheorem{cor}[thm]{Corollary}
\newtheorem{lemma}[thm]{Lemma}
 \newtheorem{fact}[thm]{Fact}
 \newcounter{exampleno}
\newenvironment{remark}{\refstepcounter{exampleno}
\smallbreak\noindent
{\sc Remark.} }
\newcommand{\commentaire}[1]{}
\def\rN{\scalebox{-1}[1]{$N$}}
\def\tO{\widetilde{O}}
\def\tP{\widetilde{P}}
  \def\cM{\mathcal{M}}
 \def\cO{\mathcal{O}}
 \def\cP{\mathcal{P}}
 \def\Pl{P_{\ell}}
 \def\Pr{P_{\mathrm{r}}}
 \def\cL{\mathcal{L}}
 \def\cT{\mathcal{T}}
 \def\tT{\widetilde{T}}
 \def\Omin{\cO_{\mathrm{min}}}
 \def\Pmin{\cP_{\mathrm{min}}}
 \def\Tmin{\cT_{\mathrm{min}}}
 \def\cN{\mathcal{N}}
 \def\cI{\mathcal{I}}
 \def\cDL{\mathcal{DL}}
 \def\cDT{\mathcal{DT}}
 \def\tFone{\widetilde{F}_1}
 \def\qedclaim{\hfill$\triangle$\smallskip}
\newcommand{\cacher}[1]{}
\def\Tr{X_{\mathrm{r}}}
\def\Tb{X_{\mathrm{b}}}
\begin{document}
\title[New bijective links on planar maps via orientations]{New bijective links on planar maps via orientations}

\author[\'E. Fusy]{\'Eric Fusy}
\address{\'E.F.: Dept. Math., University of British Columbia, Vancouver, BC. email: eric.fusy@inria.fr}

\keywords{bijections, planar maps, bipolar orientations}
\subjclass[2000]{Primary 05A15; Secondary 05C30}

\begin{abstract}
This article presents new bijections on planar maps. At first a bijection is established
between bipolar orientations on planar maps and specific ``transversal structures''
on triangulations of the 4-gon with no separating 3-cycle, which are called
irreducible triangulations. This bijection specializes
 to a bijection between rooted non-separable maps and rooted irreducible
triangulations. This yields in turn a bijection between rooted loopless maps
and rooted triangulations, based on the observation that loopless maps and
 triangulations are decomposed in a similar way 
into components that are respectively 
non-separable maps and irreducible triangulations. 
This gives another bijective proof (after Wormald's construction published in 1980)
of the fact that rooted loopless maps with $n$ edges are equinumerous to rooted triangulations
with $n$ inner vertices.
\end{abstract}

\maketitle

\section{Introduction}
Planar maps (connected graphs equipped with a planar embedding) are a rich source
of structural correspondences and enumerative results. 
For counting purpose, all planar maps (shortly called maps) are here
assumed to be rooted, that is, they have a marked oriented edge with 
the outer face on its left.
As discovered by Tutte in the 60's~\cite{Tu63}, 
many families of maps (eulerian, triangulations, quadrangulations...) have strikingly simple counting sequences, which are reminiscent of formulas for families of plane trees. Tutte's method is based on generating functions. At first a recursive
decomposition of maps by root edge deletion is translated to a recurrence on the 
counting sequence, and then to a functional equation satisfied by the 
associated generating function. Solving this equation yields a formula for the 
counting sequence (by coefficient extraction).  Tutte's method has the advantage of being quite automatic, but requires an involved machinery ---the quadratic method--- to solve equations on generating functions~\cite{Tu73}.

By now there is a bijective approach, introduced by Schaeffer in his thesis~\cite{S-these}, to prove counting formulas for many  
map families. 
The main ingredients are as follows: given a family of map $\cM$, one searches a \emph{regular} combinatorial structure ---typically an orientation with simple outdegree conditions--- specific to the maps in $\cM$, which gives a way to associate
with each map of $\cM$ a tree with simple degree conditions.
 Conversely the map is recovered 
 by performing local operations on the tree so as to close the faces one 
 by one.
 The bijection ensures that the counting sequence of  $\cM$ 
  is equal to the counting sequence of the associated tree family, which has typically a closed formula involving binomial coefficients.

These enumerative techniques (Tutte's recursive method or Schaeffer's bijective approach) yield thus an extensive table of counting formulas
for families of maps. 
By examining such a table, one notices that seemingly unrelated map families
are equinumerous. This asks for bijective explanations as direct as possible to understand
the underlying structural correspondences. 
On a few instances there already exist very simple bijective constructions. 
Such bijections are of a type different from the above mentioned 
bijective constructions from trees. Indeed, 
they are from map to map and operate directly on the embedding
in a simple local way. 
  Let us mention the classical \emph{duality} construction between maps with $i$ vertices and $j$ faces
and maps with $j$ vertices and $i$ faces, the \emph{radial mapping} between maps with $n$ edges and 
4-regular maps with $n$ vertices, and the so-called \emph{trinity mapping} 
between bipartite 3-regular maps
with $2n$ vertices and eulerian maps with $n$ edges (the two latter constructions can be traced back
to Tutte~\cite{Tu63}). We have observed two further coincidences:

\begin{description}
\item[(i)] non-separable maps with $n$ edges are equinumerous to so-called irreducible triangulations  
(triangulations of the 4-gon
with no separating triangle) with $n+3$ vertices,
\item[(ii)] loopless maps with $n$ edges are
equinumerous to triangulations with $n+3$ vertices. 
\end{description}


In this article, 
we describe new bijective constructions for proving (i) and (ii). 
To wit, the bijection  for (i) can be seen as a direct map-to-map correspondence. But it borrows ideas from the above mentioned bijective constructions of maps from trees. At first we 
associate a combinatorial structure with each of  the two 
map-families: plane bipolar orientations for non-separable maps and
transversal structures for irreducible triangulations (these are defined in Section~\ref{sec:prelimiaries}). 
We show in Section~\ref{sec:bij_N_avoid} 
that these structures are closely related: to each plane bipolar orientation $O$ 
is associated in a bijective way 
a so-called $N$-avoiding plane bipolar poset $P=\phi(O)$, which yields in turn
a transversal structure $X=\phi'(P)$; the mapping $\Phi=\phi'\circ\phi$
is bijective onto so-called $N$-avoiding transversal structures.  

Then, 
the correspondence $\Phi$ specialises to a bijection, described in Section~\ref{sec:bij_2conn_irr}, 
between non-separable maps and irreducible triangulations.  The bijection, denoted
by $F_1$, is based on the two following
properties:
\begin{itemize}
\item each non-separable map has a  \emph{minimal} 
plane bipolar orientation and each irreducible triangulation has a \emph{minimal}
transversal structure; 
\item the correspondence
$\Phi$ matches the plane bipolar orientations that are minimal on their non-separable maps with the transversal structures that are minimal on their irreducible triangulations.
\end{itemize}

As described in Section~\ref{sec:bij_loopless_triang}, the bijection $F_1$ yields in turn a bijection,
denoted $F_2$, for proving (ii). 
We make
use of two classical decompositions:  
a  loopless map is decomposed into a non-separable map,
called the core, and a collection of components that are smaller loopless maps; whereas a 
 triangulation is decomposed into an irreducible triangulation, called the core, and a collection of 
 components that are smaller triangulations.
The key observation is that the two decompositions are parallel, for a convenient choice of the 
size parameters. The bijection $F_2$ is thus specified recursively: the cores are matched
by the bijection $F_1$, and the components are matched recursively by $F_2$.
Let us mention that another bijective proof of (ii) has been described by Wormald~\cite{Wo80},
still recursive but based on different principles (in~\cite{Wo80}, an isomorphism is established between the generating tree of loopless maps and the generating tree of  triangulations with an additional catalytic variable), 
see the discussion after Theorem~\ref{theo:F2}.
To sum up, the whole bijective scheme of the article is shown in Figure~\ref{fig:diagram}.

\begin{figure}
\begin{center}
\includegraphics[width=14cm]{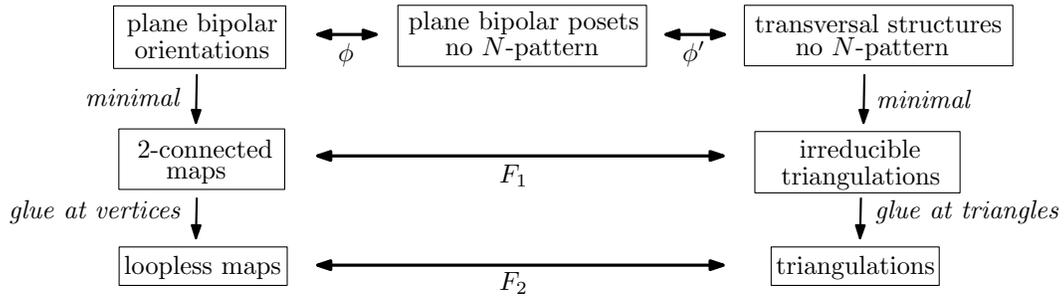}
\end{center}
\caption{Relations between the combinatorial structures and map families 
considered in the article. 
Bijective links are indicated
by double arrows. Below each double arrow is indicated the name of the 
mapping for the left-to-right arrow.}
\label{fig:diagram}
\end{figure}

\section{Preliminaries}\label{sec:prelimiaries}
\subsection{Planar maps}
A \emph{planar map}, shortly called a map, 
is a connected unlabelled planar graph embedded in the plane with no edge-crossings, the embedding 
being considered
up to continuous deformation. Loops and multiple edges are allowed.
In addition to the vertices and edges of the graph embedded, a map has 
 \emph{faces}, which are
 the maximal connected areas of the plane split by the embedding. The unbounded face is called
 the outer face, the other ones are called inner faces. Edges and vertices are said to be inner or outer
 whether they are incident to the outer face or not. A \emph{corner} 
 of a map is a triple $(e,v,e')$
 where $v$ is a vertex and $e$ and $e'$ are two consecutive edges in clockwise order
 around $v$.
 
A map is \emph{rooted} by distinguishing and orienting an edge, called the \emph{root}, with the 
condition that the root has the outer face on its right (equivalently, it is a map 
with a distinguished corner incident to the outer face). 
The origin of the root is called the \emph{root vertex}.
In this article we will consider the following families 
of (rooted) planar maps:
\begin{itemize}
\item
\noindent\emph{Loopless maps.} A loopless is a map such that each edge has two distinct extremities.
Multiple edges are allowed.
\item
\noindent\emph{non-separable maps.} A map is non-separable if it is loopless and the deletion of any one vertex
does not disconnect the map. Multiple edges are allowed.
\item
\noindent\emph{Triangulations.} A triangulation is a map with no loop nor multiple edges and with all faces
of degree 3. These correspond to maximal planar graphs embedded in the plane.
\item
\noindent\emph{Irreducible triangulations.} 
A triangulation of the 4-gon is a map with no loop nor multiple edges, with a quadrangular
outer face and triangular inner faces.
An irreducible triangulation of the 4-gon, shortly called irreducible
triangulation, is a triangulation of the 4-gon such that the interior of any 3-cycle is a face. 
\end{itemize}

\subsection{Plane bipolar orientations}\label{sec:plane_bip}
A \emph{bipolar orientation} on a connected graph $G$ is an acyclic orientation with a unique 
source (vertex with only outgoing edges) denoted by $s$ and a unique sink (vertex with only ingoing edges) denoted by $t$.
Equivalently, the partial order induced on the vertices by the orientation has a unique minimum 
and a unique maximum. Bipolar orientations constitute the natural combinatorial structure characterising 2-connectivity.
Indeed, as is well known~\cite{DeOss}, a graph $G$ with two marked vertices $s$ and $t$ admits a
bipolar orientation with source $s$ and sink $t$ if and only if $G$ is non-separable upon connecting $s$ and
$t$ by an edge.

The source and the sink of the bipolar orientation are also called the \emph{poles} or the \emph{special vertices}. The other vertices are said to be \emph{non-special}.
A \emph{plane bipolar orientation} is a bipolar orientation on a 
planar map $M$ such that the source and the sink are outer vertices of $M$.
It is convenient when considering plane bipolar orientations to draw two half-lines starting
respectively from $s$ and $t$ and reaching into the outer face. The outer face is thus split into
two unbounded faces, which are called the \emph{special faces}; looking from $s$ to $t$,  the one on the left is called the \emph{left special face} and the one on the right
is called the \emph{right special face}. 
As described in~\cite{DeOss,FeFuNoOr07}, plane bipolar orientations have the nice property that they are characterised by two simple
local properties, one around vertices and one around faces, see Figure~\ref{fig:structures}(a):

\begin{figure}
\begin{center}
\includegraphics[width=16cm]{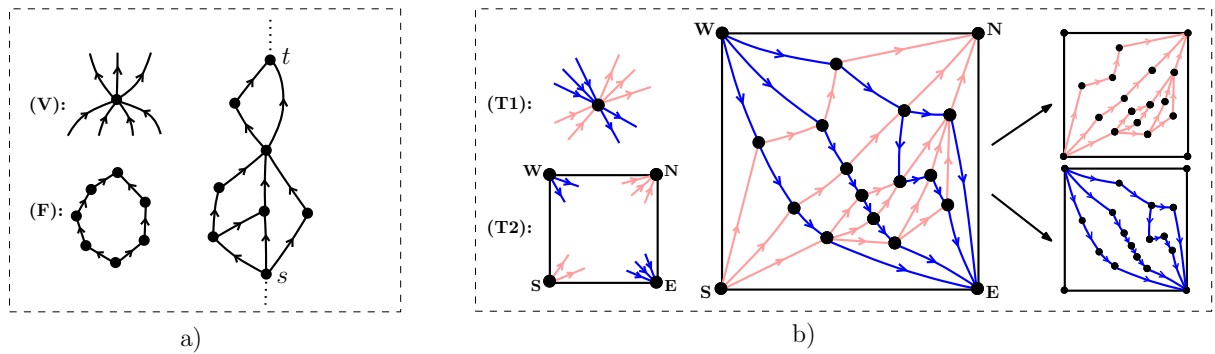}
\end{center}
\caption{(a) Plane bipolar orientations: local conditions and an example. (b) Transversal structures: local conditions and an example.}
\label{fig:structures}
\end{figure}

\begin{description}
\item[(V)]
Around each non-special vertex, the edges form a nonempty interval of outgoing edges and a
nonempty interval of ingoing edges.
\item[(F)]
The contour of each inner face $f$ is made of two oriented paths with same origin $s_f$ and
same end $t_f$. The two special faces are each 
 bordered by a path that goes 
from $s$ to $t$.  
\end{description}
Given Property~(V), we define the \emph{left lateral face} 
of a non-special vertex $v$ as the 
face in the corner 
between the last ingoing  
edge and the first outgoing  edge in clockwise order
around $v$; define similarly the \emph{right lateral face} 
of $v$ as the 
face in the corner 
between the last outgoing  
edge and the first ingoing  edge in clockwise order
around $v$.
Given an inner face $f$, 
the vertices $s_f$ and $t_f$ are respectively called the \emph{source} and the \emph{sink} of $f$.
The path from $s_f$ to $t_f$ that has the exterior of $f$ on its left 
is called the \emph{left lateral path} of $f$ and is denoted $\Pl(f)$;
the path from $s_f$ to $t_f$ that has the exterior of $f$ on its right  
is called the \emph{right lateral path} of $f$ and is denoted $\Pr(f)$. 
The last edge
of $\Pl(f)$ is called the \emph{topleft edge} of $f$ and the first edge of $\Pr(f)$ is called 
the \emph{bottomright edge} of $f$. The vertices of $\Pl(f)\backslash\{s_f,t_f\}$
 are called \emph{left lateral vertices} of $f$, and the vertices of 
 $\Pr(f)\backslash\{s_f,t_f\}$
 are called \emph{right lateral vertices} of $f$.
The paths bordering the left special face and the right special face
are called respectively  the \emph{left outer path} and the \emph{right outer path}.

It is well known that a plane bipolar orientation $O$ induces a partial order on the edge-set:
$e\leq e'$ if and only if there exists an oriented path passing by $e$ before passing by $e'$. 
In other words $\leq$ is the transitive closure of the binary relation: $e\prec e'$ if and only if there is a vertex $v$
such that $e$ is ingoing at $v$ and $e'$ is outgoing at $v$.

Due to a classical duality relation satisfied by plane 
bipolar orientations~\cite{DeOss}, there is a partial order on the face-set and 
another partial order, called dual, on the edge-set.
Precisely, given two faces $f$ and $f'$ (special or not), write $f\prec_F f'$ if
there is an edge of $O$ with $f$ on its left and $f'$ on its right; the transitive closure
of $\prec_F$ is a partial order, called the
\emph{left-to-right order} on the faces of $O$. 
And the dual order on the edge-set, denoted $\leq^*$, 
is defined as the transitive closure of the
 relation: $e\prec^* e'$ (with $e$ and $e'$ edges of $O$) 
 if and only if there exists a face $f$ of $O$ such that $e$ is in the left lateral path
and $e'$ is in the right lateral path of $f$.

Given a plane bipolar orientation with no multiple edges, 
a transitive edge is an edge whose two extremities are 
connected by an oriented path of length at least 2. 
A \emph{plane bipolar poset} is a plane bipolar orientation with at least 3 vertices, with no multiple edges, and 
with no transitive edges. 
The terminology refers to the fact that  
a plane bipolar poset is a planarly embedded Hasse diagram representing a poset.

The following property is easily checked from Condition (F):
\begin{fact}\label{fact:poset}
A plane bipolar
orientation with at least 3 vertices   
is a plane bipolar poset if and only if the two lateral paths of each inner face have length at least 2.
 \end{fact}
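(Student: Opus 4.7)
The forward direction is immediate from condition~(F). For any inner face $f$ of a plane bipolar poset, $\partial f=\Pl(f)\cup \Pr(f)$ with both paths oriented from $s_f$ to $t_f$. Were $|\Pr(f)|=1$, it would consist of a single edge $e$ from $s_f$ to $t_f$, and $\Pl(f)$ would be a second oriented path between the same endpoints: if $|\Pl(f)|=1$ its edge is parallel to $e$, contradicting the no-multiple-edges requirement, and if $|\Pl(f)|\ge 2$ then $e$ is transitive, contradicting the no-transitive-edges requirement. The argument ruling out $|\Pl(f)|=1$ is symmetric.

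For the converse, assume every inner face has both lateral paths of length at least~$2$, and rule out multiple edges and transitive edges by a common innermost-region argument. Suppose a bad edge $e\colon u\to v$ existed, paired with either a parallel edge $e'$ (multi-edge case) or an oriented witness path $P$ of length $\ge 2$ from $u$ to $v$ (transitive case). Together $e$ and its partner bound two regions of the plane; choose the bad edge and partner so as to minimize the number of edges strictly inside one of the two regions $R$. Minimality forces $R$ to contain no interior edges, hence no interior vertices either (a non-special interior vertex would drag its in- and out-edges into the interior of $R$), so $R$ is a single face bounded exactly by $e$ and the partner. If $R$ is an inner face, the multi-edge case gives a digon whose two lateral paths have length~$1$, while the transitive case gives a face with lateral paths $e$ (length~$1$) and $P$; both contradict the hypothesis. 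The remaining degenerate case is when $R$ is the outer face: by condition~(F) this forces $u=s$ and $v=t$, and inspecting an inner face adjacent to $e$ on the opposite side shows that $e$ must be a complete lateral path of that face (because neither the global source $s$ nor the global sink $t$ can be an interior vertex of a simple oriented path), yielding again a lateral path of length~$1$.

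The principal technical hurdle is the minimality step itself: one must verify that any putative interior vertex or edge of $R$ would allow the construction of a strictly smaller bad pair (parallel edges or transitive witness) lying inside $R$, contradicting the minimal choice. This in turn reduces to the observation that an interior structure inside $R$ would contain an inner face incident to $e$ whose short lateral path, again by condition~(F), furnishes either an edge parallel to $e$ or a shorter transitive witness for~$e$, both of which reduce the region size.
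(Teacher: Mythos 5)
Your forward implication is fine. The converse, however, has a genuine gap at precisely the step you single out as the main hurdle. Take a minimal bad pair $(e,P)$ with $e$ from $u$ to $v$, and suppose the chosen region $R$ contains interior structure, so that the inner face $f$ incident to $e$ inside $R$ is not all of $R$. By condition (F), $\partial f$ splits into two oriented paths from $s_f$ to $t_f$, and $e$ lies on one of them, say $\Pl(f)=A_1\cdot e\cdot A_2$ with $A_1$ running from $s_f$ to $u$ and $A_2$ from $v$ to $t_f$. The other lateral path $\Pr(f)$ joins $s_f$ to $t_f$, \emph{not} $u$ to $v$; it yields a parallel edge or a shorter transitive witness for $e$ only when $A_1=A_2=\emptyset$, i.e.\ when $\Pl(f)=\{e\}$ already has length $1$ --- which is the contradiction you are after, not a reduction of the region. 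When $A_1$ or $A_2$ is nonempty, no strictly smaller bad pair is exhibited, for $e$ or for any other edge, and the induction does not close. (For what it is worth, the paper states this Fact as ``easily checked'' and gives no proof, so there is no argument of the author's to compare against.)

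The converse can in fact be proved with no minimality at all, by showing directly that the face next to $e$ on the side of $P$ is the culprit. Say the bounded region $R$ of $e\cup P$ lies to the right of $e$ and let $f_0$ be the inner face on the right of $e$, so $e\in\Pl(f_0)$. If $u\neq s_{f_0}$, then $u$ is a left lateral vertex of $f_0$ and hence has an ingoing edge lying in the closure of $R$. Following ingoing edges backwards from $u$, the chain must terminate at the source $s$, the unique vertex with no ingoing edge; but every vertex of the chain is forced strictly inside $R$, because an edge from $u$, from $v$, or from an internal vertex of $P$ into the chain would close a directed cycle (using $e$ or an initial segment of $P$), while $s$ is incident to the unbounded outer face and so lies outside $R$. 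This contradiction gives $u=s_{f_0}$; the mirror argument on outgoing edges of $v$ and the sink $t$ gives $v=t_{f_0}$, whence $\Pl(f_0)=\{e\}$ has length $1$, contrary to hypothesis. This version also dispenses with your degenerate ``outer region'' case, since one always works with the bounded region.
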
 
 
An important remark to be used later is that
 each inner face $f$ of a plane bipolar poset has at least one lateral vertex on each side: the
 last vertex of $\Pl(f)\backslash\{s_f,t_f\}$ is called the \emph{topleft lateral vertex} of $f$ and the
 first vertex of $\Pr(f)\backslash\{s_f,t_f\}$ is called the \emph{bottomright lateral vertex} of $f$.
 Given a plane bipolar poset, a \emph{$N$-pattern} is a triple $(e_1,e_2,e_3)$ of edges
 such that $e_1$ and $e_2$ have the same origin $v$, $e_2$ and $e_3$
 have the same end $v'$, $e_1$ follows $e_2$
 in clockwise order around $v$, and $e_3$ follows $e_2$ in clockwise order around $v'$. The edge $e_2$ is called the \emph{central edge} of the 
 $N$-pattern. 
 A \emph{\rN-pattern} is defined similarly, upon replacing clockwise by counterclockwise.
Plane bipolar posets with no $N$-pattern are said to be \emph{$N$-avoiding}; these play an important role in the bijections to be given next.

\subsection{Transversal structures}\label{sec:trans_struct}

Transversal structures  
 play a similar part for irreducible triangulations as plane bipolar orientations
for non-separable maps.
%

Let us give the precise definition. 
Given a rooted irreducible triangulation $T$, denote by $N$, $E$, $S$, $W$ the outer vertices of $T$ in 
clockwise order around the outer face, starting from the origin of the root. 
A \emph{transversal structure} of $T$ is an orientation and a bicoloration of the inner edges of $T$, say each inner edge is red or blue, such that the following conditions are satisfied, see Figure~\ref{fig:structures}(b)~\footnote{In all figures, red edges are
light and blue edges are dark.}.
\begin{description}
\item[(T1)]
The edges incident to an inner vertex of $T$ form in clockwise order: a nonempty interval of
outgoing red edges, a nonempty interval of outgoing blue edges, a nonempty interval of 
ingoing red edges, and a nonempty interval of ingoing blue edges. 
\item[(T2)]
The edges incident to $N$, $E$, $S$, and $W$ are respectively ingoing red, ingoing blue,
outgoing red, and outgoing blue.
\end{description} 
 Transversal structures were introduced by He~\cite{He93} under the name of regular 
 edge-labellings.
They were further investigated by the author~\cite{Fu07b} and have many applications in graph drawing:
straight-line drawing~\cite{Fu07b}, visibility drawing~\cite{Kant}, rectangular layouts~\cite{Kant}.
Transversal structures characterise triangulations of the 4-gon that are irreducible. 
Indeed, irreducibility is necessary, 
and each irreducible triangulation admits a transversal structure~\cite{Fu07b}.
Moreover, transversal structures are closely related to plane bipolar orientations by the following property:

\begin{fact}\label{fact:trans_poset}
Let $X$ be a transversal structure on an irreducible triangulation $T$ having at least one 
inner vertex. Then the oriented map $\Tr$ 
formed by the red
edges and the vertices of $T\backslash\{W,E\}$ is a plane bipolar poset, called the \emph{red
bipolar poset} of $X$. 
Similarly,   the oriented map $\Tb$ 
formed by the blue
edges and the vertices of $T\backslash\{S,N\}$ is a plane bipolar poset, called the \emph{blue 
bipolar poset} of $X$.
\end{fact}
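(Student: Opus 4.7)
The plan is to verify in turn the defining properties of a plane bipolar poset for $\Tr$: that it is a plane bipolar orientation with source $S$ and sink $N$, has at least three vertices, and (invoking Fact~\ref{fact:poset}) that both lateral paths of every inner face have length at least $2$. The analogous argument for $\Tb$ proceeds by swapping the colors and the pole pairs $(S,N)$ and $(W,E)$.

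The vertex condition (V) is immediate from (T1): at each inner vertex, restricting the four nonempty intervals to the red edges yields a nonempty outgoing-red interval followed by a nonempty ingoing-red interval. At $S$ and $N$, condition (T2) directly makes them respectively a global source and global sink. Since (T2) also forces every edge at $W$ and $E$ to be blue, erasing these two vertices along with all blue edges merges the inner triangles formerly incident to $W$ or to $E$ into the outer face of $\Tr$, leaving $S$ and $N$ on its boundary.

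To verify the face condition (F), I would observe that each inner face $f$ of $\Tr$ is a union of inner triangles of $T$ glued along blue edges, and then inspect each corner of $f$ via (T1). The two consecutive red edges at such a corner must occur in one of only a few orientation patterns, and the only way these patterns piece together around $f$ is as two oriented paths with a common source $s_f$ and common sink $t_f$. The same inspection applied to the outer face of $\Tr$ (which absorbs the former $W$- and $E$-neighborhoods) produces the left and right outer paths from $S$ to $N$. Combined with (V), this shows that $\Tr$ is a plane bipolar orientation; the hypothesized inner vertex of $T$ supplies the required third vertex.

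It remains to rule out an inner face $f$ of $\Tr$ whose left lateral path is a single red edge $e = uv$. The angle of $u$ facing $f$ lies inside the outgoing-red interval of (T1), so its only edges are the boundary edges $uv$ and $uw_1$, where $w_1$ is the first inner vertex of the right lateral path of $f$; hence the third vertex of the inner triangle of $T$ lying to the right of $e$ must be $w_1$ itself, producing an edge $w_1v$ of $T$. If $w_1v$ is red, it either equals the next edge of the right lateral path, making $f$ coincide with the triangle $uvw_1$ (in which case (T1) at $w_1$ forces nonempty blue intervals between $uw_1$ and $w_1v$ on \emph{both} sides, contradicting that the triangle has no interior edge), or it is a red chord of $f$, splitting it; if $w_1v$ is blue, the symmetric angle analysis at $v$ shows it cannot fit into $f$'s boundary at $v$. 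The main obstacle of the whole proof is precisely this face-and-corner analysis, which requires combining (T1), the planar embedding of $T$, and the irreducibility of $T$ (used implicitly to forbid separating $3$-cycles inside the region of $f$).
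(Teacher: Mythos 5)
Your overall strategy---verify conditions (V) and (F) directly for the red subgraph and then apply Fact~\ref{fact:poset}---is legitimate, but the verification of (F) is exactly where the substance of the statement lies, and your argument there is a gap rather than a proof. The corner inspection via (T1) only tells you that each corner of an inner face $f$ of $\Tr$ at an inner vertex is a local source (two consecutive outgoing red edges, with an empty sector), a local sink (two consecutive ingoing red edges, empty sector), or a through corner (one ingoing, one outgoing, with a nonempty blue interval in the sector). Nothing in this purely local information forbids the boundary of $f$ from carrying two source corners and two sink corners in alternation, nor, globally, a directed red circuit; an orientation can satisfy (V) at every vertex and still fail (F). Ruling this out is the actual content of the claim that $\Tr$ is a plane bipolar orientation. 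The paper does not reprove it: it cites~\cite{Fu07b} and records the key global argument, namely that a monochromatic circuit of $X$ would have to contain in its interior a chordal monochromatic path of the other colour, which is incompatible with (T1). Your sentence ``the only way these patterns piece together around $f$ is as two oriented paths with a common source $s_f$ and common sink $t_f$'' asserts the conclusion instead of deriving it, so as written this step would fail.

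On the remaining part (both lateral paths of each inner face have length at least $2$), your case analysis for a length-one left lateral path is essentially sound, though you omit the degenerate case where both lateral paths are single edges (excluded because $T$ has no multiple edge, which must be said). The paper's route is shorter and cleaner: since $T$ is simple, $f$ has a lateral vertex $v$ on at least one side; by (T1) the sector of $v$ facing $f$ contains a blue edge; and by (T1) again every blue edge inside $f$ goes from a left lateral vertex to a right lateral vertex of $f$, so both lateral paths have a lateral vertex and hence length greater than $1$. Finally, irreducibility of $T$ plays no role in this proof (it is what guarantees that a transversal structure exists, not what makes this fact true), so your closing appeal to separating $3$-cycles is a red herring.
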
  
\begin{proof}
The fact that $\Tr$ and $\Tb$ are plane bipolar orientations was proved in~\cite{Fu07b}
(the main argument is that a monocoloured circuit of $X$ would have a monocolored chordal path of the 
other color inside,
which is not possible by Condition~T1). Clearly in each inner face $f$  of $\Tr$, the two lateral paths
are not both of length 1, as $T$ has no multiple edge. 
Hence, $f$ has at least one lateral vertex $v$ on one
side (left or right), so $f$ has at least one blue edge inside by Condition (T1).
But, by condition (T1) again, such a blue edge goes from a left lateral vertex of $f$
to a right lateral vertex of $f$. Hence both lateral paths of $f$ have length greater
than $1$, so $\Tr$ is a plane bipolar poset by Fact~\ref{fact:poset}.
\end{proof}
A transversal structure is called \emph{$N$-avoiding} if both its red and its blue bipolar posets
are $N$-avoiding.

\section{Bijections between plane bipolar orientations and $N$-avoiding structures}\label{sec:bij_N_avoid}
This section covers the top-line of the diagram shown in Figure~\ref{fig:diagram}.
We present a bijective mapping $\Phi$ from plane bipolar orientations
to $N$-avoiding transversal structures; $\Phi$ is the composition of two 
bijections $\phi$ and $\phi'$: $\phi$ maps a plane bipolar orientation
to an $N$-avoiding plane bipolar poset, and $\phi'$ completes an $N$-avoiding
plane bipolar poset into an $N$-avoiding transversal structure.

\subsection{From plane bipolar orientations to $N$-avoiding plane bipolar posets}
Let $O$ be a plane bipolar orientation. We associate with $O$ an oriented
 planar map $P=\phi(O)$ as follows, see Figure~\ref{fig:phi1}(a)-(c).
 
 \begin{figure}
\begin{center}
\includegraphics[width=16cm]{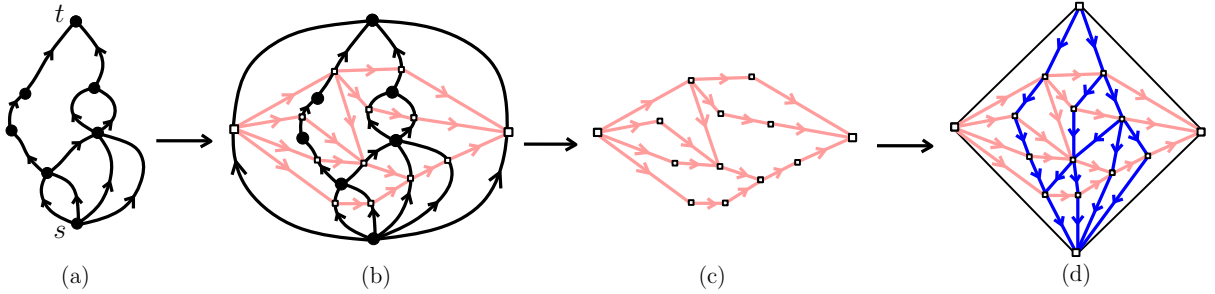}
\end{center}
\caption{(a) A plane bipolar orientation, (c) the associated $N$-avoiding plane bipolar poset, (d) the
associated $N$-avoiding transversal structure.}
\label{fig:phi1}
\end{figure}

\begin{itemize}
\item
\emph{Add two outer edges.} 
Add two edges $\ell$ and $r$ 
going from the source to the sink of $O$, so that the whole map is contained
in the 2-cycle delimited on the left by $\ell$ and on the right  by $r$. 
The augmented bipolar orientation is 
denoted $\widetilde{O}$.
\item
\emph{Insert the vertices of $P$.} A vertex of $P$, depicted in white, is inserted in the middle
of each edge of $\widetilde{O}$. The vertex inserted in the edge $\ell$  is denoted~$s_P$, and the vertex  inserted in the edge $r$ 
is denoted~$t_P$. 
\item
\emph{Insert the edges of $P$.} Edges of $P$ are planarly 
inserted in the interior of each inner face $f$ of $\tO$
 so as to create the following adjacencies, see Figure~\ref{fig:local}(a):
the vertices of $P$ in the left lateral path of $f$
are connected to the vertex of $P$ in the bottomright edge of $f$;
and the vertices of $P$ in the right lateral path of $f$
are connected to the vertex of $P$ in the topleft edge of $f$.
The inserted edges are directed from the left lateral path to the right lateral path of $f$.
\end{itemize}

\begin{figure}
\begin{center}
\includegraphics[width=16cm]{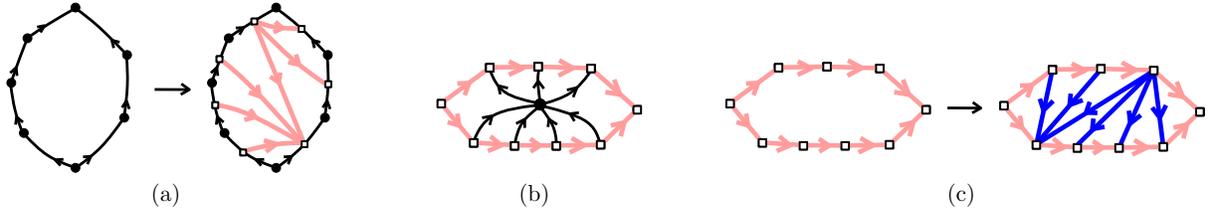}
\end{center}
\caption{(a) Insertion of the edges of $P=\phi(O)$ inside
a face of $O$. (b) Configuration of an inner face of $P$ around the associated non-special vertex of $O$.
(c) Insertion of the blue edges of $X=\phi'(P)$ inside a face of $P$.}
\label{fig:local}
\end{figure}

\begin{lemma}\label{lem:prop_phi1}
Let $O$ be a plane bipolar orientation with $n$ edges.
Then the oriented planar map $P=\phi(O)$ is an $N$-avoiding plane 
bipolar poset with $n+2$ vertices. 
The source of $P$ 
is the vertex $s_P$  and the sink of $P$ is the vertex $t_P$. 

When $P$ and the augmented bipolar orientation 
$\widetilde{O}$ are superimposed, there is 
exactly one non-special vertex $v$ of $O$ in each inner face $f$ of $P$: the outgoing edges of $v$ are incident to the left lateral vertices of $f$ and the ingoing edges of $v$ are incident to the right lateral vertices of $f$,
see Figure~\ref{fig:local}(b).
\end{lemma}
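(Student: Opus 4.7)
The vertex count $n+2$ is immediate from the construction (one white vertex per edge of $\tO$, and $\tO$ has $n+2$ edges). The substance of the lemma is to exhibit, for each non-special vertex $v$ of $O$, an inner face $F_v$ of $P$ with the adjacencies of Figure~\ref{fig:local}(b), and to deduce from this the plane-bipolar-orientation structure, the poles, the poset property, and $N$-avoidance. My plan is to carry out the local face-analysis around $v$ first, and then extract all global properties from it.

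Fix a non-special $v$ with outgoing edges $e_1, \dots, e_k$ and ingoing edges $e'_1, \dots, e'_\ell$ in clockwise order; let $w_i, w'_j$ be the midpoints placed on them. The face of $\tO$ wedged between two consecutive outgoing edges $e_i, e_{i+1}$ has $v$ as source with $e_i$ starting $\Pl$ and $e_{i+1}$ as bottomright, so the construction inserts the single arc $w_i \to w_{i+1}$ near $v$; symmetrically, between two consecutive ingoing edges $e'_j, e'_{j+1}$ one gets $w'_{j+1} \to w'_j$. In $f_R$ both $e_k$ and $e'_1$ lie on $\Pl(f_R)$, so $w_k, w'_1$ both point to the bottomright white vertex $\beta_R$ of $f_R$; dually, in $f_L$ the topleft white vertex $\tau_L$ sends arcs to $w_1$ and $w'_\ell$. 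These arcs close into a directed cycle bounding a region $F_v$ of $P$ containing $v$, with source $\tau_L$, sink $\beta_R$, left lateral path $\tau_L \to w_1 \to \dots \to w_k \to \beta_R$ and right lateral path $\tau_L \to w'_\ell \to \dots \to w'_1 \to \beta_R$. The midpoints of outgoing edges of $v$ appear as the left lateral vertices of $F_v$ and those of ingoing edges as the right lateral vertices, matching Figure~\ref{fig:local}(b).

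An Euler-characteristic count then shows $v \mapsto F_v$ exhausts the inner faces of $P$. For condition~(V) at a white vertex $w$ on an edge $e$ of $\tO$, the $P$-arcs at $w$ come from the two faces of $\tO$ flanking $e$, and their direction depends only on the role of $e$ (bottomright, topleft, interior $\Pl$-edge, or interior $\Pr$-edge) in each; a short case check shows that the outgoing and ingoing arcs form contiguous cyclic blocks around $w$. The edges $\ell$ and $r$ each bound one inner face of $\tO$ in which, being of length~$1$, they play the role of topleft and bottomright respectively, which forces $s_P$ to have only outgoing and $t_P$ only ingoing arcs; so $s_P$ and $t_P$ are the unique poles. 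Condition~(F) is then immediate from the description of $F_v$, no multiple edges can occur (the endpoints of any inserted arc are midpoints of distinct edges of a common face of $\tO$), and the poset property follows from Fact~\ref{fact:poset} because each lateral path of every $F_v$ contains at least one interior midpoint (as $v$ has at least one outgoing and one ingoing edge).

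The delicate step, and the one I expect to be the main obstacle, is $N$-avoidance. Every $P$-edge has one of two local forms inside a face of $\tO$: ``$\Pl$-midpoint $\to$ bottomright'' or ``topleft $\to$ $\Pr$-midpoint''. Hence the outgoing $P$-arcs at any white vertex $w$ are organised into coherent ``fans'' directed at various bottomright midpoints, and the ingoing $P$-arcs into fans emanating from various topleft midpoints. For a hypothetical $N$-pattern $(a_1, a_2, a_3)$ with central edge $a_2$, I would unwind the cyclic arrangement of these fans at both endpoints of $a_2$ and show that the clockwise configuration required by an $N$-pattern contradicts the rigid fan structure imposed by the construction. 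The core work here is a careful case analysis of which types of faces of $\tO$ $a_1$ and $a_3$ can lie in and how they can be cyclically positioned around $a_2$.
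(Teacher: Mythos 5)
Your construction of the face $F_v$ around each non-special vertex $v$, the Euler count showing that these faces exhaust the inner faces of $P$, and the identification of the poles via $\ell$ and $r$ all match the paper's argument and are sound. The genuine gap is exactly where you flag it: $N$-avoidance is announced as a plan (``I would unwind \dots and show \dots'') but never carried out, and the plan points in a harder direction than necessary. The key fact --- which your own verification of condition (V) almost states --- is that \emph{all} outgoing $P$-edges at a white vertex $w$ sitting on an edge $e$ of $\widetilde{O}$ lie inside the single face of $\widetilde{O}$ to the right of $e$ (exactly one arc to that face's bottomright midpoint, plus arcs to its right lateral midpoints only when $e$ is the topleft edge), and all ingoing $P$-edges at $w$ lie inside the single face to the left of $e$; your phrase ``fans directed at various bottomright midpoints'' misstates this, since a given $w$ sends an arc to only one bottomright midpoint. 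From this, any $N$-pattern $(a_1,a_2,a_3)$ is confined to a single face $f$ of $\widetilde{O}$: $a_1,a_2$ share an origin so lie in one face, and $a_2,a_3$ share an end so lie in one face. Inside $f$ the only vertex with two outgoing inserted arcs is the topleft midpoint $\tau$ and the only vertex with two ingoing inserted arcs is the bottomright midpoint $\beta$, so $a_2$ would have to be the arc $\tau\to\beta$; but that arc is clockwise-last among the outgoing arcs at $\tau$ and clockwise-last among the ingoing arcs at $\beta$ (the next $P$-edge clockwise at either endpoint lies in the adjacent face of $\widetilde{O}$ and has the wrong direction), so no admissible $a_1$ exists. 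This short argument replaces your projected case analysis; without it, the ``$N$-avoiding'' part of the statement is unproved.

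A secondary issue: you conclude that $P$ is a plane bipolar orientation from conditions (V) and (F) alone, but the characterization also requires the condition on the two special faces of $P$, which you never examine; in particular acyclicity of $P$ is nowhere established in your write-up. The paper avoids this by proving acyclicity directly --- an edge of $P$ from $v$ to $v'$ forces $e\prec^* e'$ for the corresponding edges of $O$, so the adjacency relation of $P$ embeds into the dual order $\leq^*$ --- and then exhibiting at least one ingoing and one outgoing edge at every non-pole vertex of $P$ (one in each of the two flanking faces of $\widetilde{O}$). Either verify the outer-face condition explicitly or adopt that direct argument.
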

\begin{proof}
Let us first prove that $P$ is acyclic. Given two vertices $v$ and $v'$ of $P$, 
write $v\prec_P v'$ if there is an edge of $P$ from $v$ to $v'$. 
Let $e$ and $e'$ be the edges of $O$ that respectively correspond to $v$ and $v'$.
Note that $v\prec_P v'$ implies  $e\prec^* e'$. Hence the transitive closure
of $\prec_P$ corresponds to a suborder of $\leq^*$, so $P$ is acyclic.
To prove bipolarity, observe that, for each vertex $v$ of $P$ distinct from $s_P$ and $t_P$,
the corresponding edge $e$ of $O$ has 
one inner face of $\tO$ on each side. Hence, by definition of $\phi$, $v$ has at least one
ingoing edge in the face on the left of $e$ (possibly more than one if $e$ is the bottomright edge of that face) and at least one outgoing edge in the face of $\tO$ on the
right of $e$ (possibly more than one if $e$ is the topleft edge of that face).

Next, the rules for inserting the edges of $P$ 
easily imply that each non-special vertex $v$ of $O$ gives rise to an inner face $f_v$ of $P$ such that the edges of $P$ outgoing at $v$ are incident 
 to the left lateral vertices of $f_v$ and the edges of $P$ ingoing at $v$ are incident
 to the right lateral vertices of $f_v$, see Figure~\ref{fig:local}(b). 
 In addition the left lateral path of $f_v$
 has length $\mathrm{Outdeg}(v)+1$  and the right lateral
path of $f_v$ has length $\mathrm{Indeg}(v)+1$. Hence every inner face of $P$
of the form $f_v$ has its two lateral paths of length greater than $1$.
  \smallskip
 
\noindent{\bf Claim.}
The number of non-special vertices of $O$ is equal to 
 the number of inner faces of $P$.
 \smallskip
 
\noindent{\it Proof of the claim.} 
Write $V(M)$, $E(M)$ and $F(M)$ for the sets of
vertices, edges, and inner faces of a map $M$ and write $\mathrm{deg}(f)$ for the 
 number of edges on the contour of an inner face $f$. 
By definition of $\phi$, $|V(P)|=|E(\tO)|$ and 
$E(P)=\sum_{f\in F(\widetilde{O})}(\mathrm{deg}(f)-1)=2|E(\widetilde{O})|-2-|F(\tO)|$.
By the Euler relation, $|F(P)|=|E(P)|-|V(P)|+1$, so $|F(P)|=|E(\widetilde{O})|-|F(\widetilde{O})|-1$, which is equal to $(|V(\tO)|-2)$ again by the Euler relation.
Hence, the number of inner faces of $P$ is equal to the number
of non-special vertices of $O$.
\qedclaim

The claim ensures that every inner face of $P$ is of the form $f_v$ for some
non-special vertex $v$ of $O$. Hence every inner face of $P$ has its two lateral
paths of lengths greater than $1$, so  $P$ is a plane bipolar poset by Fact~\ref{fact:poset}.

Finally, the fact that $P$ has no $N$-pattern is due to the two following observations:
i) two edges of $P$ with same origin are inside the same face of $O$, and two edges of $P$
with same end are inside the same face of $O$, hence any $N$-pattern must be 
inside a face of $O$;
ii)  there is no $N$-pattern of $P$ inside a face of $O$. 
 \end{proof}

As we prove next, the mapping $\phi$ is in fact a bijection 
and has an explicit simple inverse.

Given a bipolar poset $P$ ($N$-avoiding or not), let $O=\psi(P)$ 
be the oriented planar map
defined as follows:
\begin{itemize}
\item
\emph{Insert the vertices of $O$.} One vertex of $O$ is inserted in each face of $P$.
The vertex inserted in the right special face of $P$ is denoted $s$ and the vertex inserted in the left special
face of $P$ is denoted $t$.
\item
\emph{Insert the edges of $O$.} Each non-special vertex $v$ of $P$ gives rise to an edge of $O$ that goes from the vertex of $O$ inside the right lateral face of $v$ to the vertex of $O$  inside the left lateral face of $v$. 
\end{itemize}

The correspondence between  non-special vertices of $O$ and inner faces of $P$, as stated in Lemma~\ref{lem:prop_phi1} and 
shown in Figure~\ref{fig:local}(b), ensures that $\psi$ is exactly the procedure to recover a 
plane bipolar orientation from its image $P=\phi(O)$. In other words:
\begin{lemma}\label{lem:psi1g}
For each plane bipolar orientation $O$, $\psi(\phi(O))=O$.
\end{lemma}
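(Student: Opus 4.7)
The plan is a local-to-global verification: superimposing $P=\phi(O)$ on $\widetilde{O}$, I check that $\psi$ puts a vertex at each vertex of $O$ and recreates each directed edge of $O$ from the matching non-pole vertex of $P$, using the local picture supplied by Lemma~\ref{lem:prop_phi1}.

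I would start by matching the faces of $P$ with the vertices of $O$. Lemma~\ref{lem:prop_phi1} already gives a bijection between inner faces of $P$ and non-special vertices of $O$ under which $v$ sits in the interior of $f_v$. Because $P$-edges are drawn only inside inner faces of $\widetilde{O}$, and the two poles $s_P,t_P$ are placed in the middles of $\ell$ and $r$, the remaining two vertices $s$ and $t$ of $O$ must lie in the two special faces of $P$. An orientation check on the outer paths of $P$---specifically, the directed path $s_P\to v_1\to\cdots\to v_k\to t_P$ through the $P$-vertices on the outgoing edges at $s$, and the symmetric path through the ingoing edges at $t$---shows that these are the two outer paths of $P$, and identifies $s$ as belonging to the right special face and $t$ to the left special face, in agreement with the convention used by $\psi$.

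For the edges, every non-pole vertex of $P$ is the midpoint $v_e$ of a unique inner edge $e$ of $\widetilde{O}$ (i.e.\ an edge of $O$) with some tail $u$ and head $w$. The construction of $\phi$ places all outgoing $P$-edges at $v_e$ inside the $\widetilde{O}$-face to the right of $e$ (where $e$ sits on the left lateral path) and all ingoing $P$-edges inside the $\widetilde{O}$-face to the left of $e$ (where $e$ sits on the right lateral path). A direct inspection of the cyclic order at $v_e$ then shows that the right lateral face of $v_e$ (the corner between the last outgoing and the first ingoing $P$-edge clockwise) is the $P$-face on the tail side of $v_e$, and the left lateral face is the $P$-face on the head side. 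By the vertex correspondence of the previous paragraph, these are precisely the $P$-faces where the reconstructed vertices $u$ and $w$ are placed. Hence $\psi$ inserts at $v_e$ an edge from $u$ to $w$, which is exactly $e$ with its orientation.

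The two matchings identify $\psi(\phi(O))$ with $O$ as embedded oriented graphs, giving $\psi(\phi(O))=O$. The only nontrivial step is the identification of $s$ and $t$ with the correct special faces of $P$; this is the sole global ingredient, while everything else reduces to the local picture of Lemma~\ref{lem:prop_phi1} and the definitions of $\phi$ and $\psi$.
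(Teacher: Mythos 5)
Your proof is correct and takes essentially the same route as the paper, which dispatches this lemma in one sentence by invoking the correspondence between inner faces of $P$ and non-special vertices of $O$ established in Lemma~\ref{lem:prop_phi1}; your write-up simply makes explicit the same local verification (lateral faces of $v_e$ versus tail/head of $e$, and the placement of $s$, $t$ in the right and left special faces).
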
 
Next we prove that $\psi$ is also the right inverse of $\phi$ when restricted to $N$-avoiding
structures.
\begin{lemma}\label{lem:psi1d}
For any plane bipolar poset $P$, the oriented map $O=\psi(P)$ is a plane bipolar
orientation. In addition $\phi(O)=P$ if $P$ is $N$-avoiding.
\end{lemma}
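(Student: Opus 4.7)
The proof naturally splits in two. Part (i): $O = \psi(P)$ is always a plane bipolar orientation. Part (ii): $\phi(O) = P$ when $P$ is $N$-avoiding.

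For part (i), I check the local conditions (V) and (F) and identify the poles. Fix a vertex $w$ of $O$ placed inside an inner face $f$ of $P$. Unwinding the definition of $\psi$, an $O$-edge at $w$ is outgoing iff it arises from a non-special vertex $v$ of $P$ having $f$ as its right lateral face, i.e., from a left lateral vertex of $f$; incoming $O$-edges at $w$ correspond symmetrically to right lateral vertices of $f$. The planar cyclic order around $f$ is inherited by the edges around $w$, so outgoing and incoming edges at $w$ each form a contiguous arc, both nonempty by Fact~\ref{fact:poset} (existence of a topleft and a bottomright lateral vertex of $f$). Hence (V) holds at $w$. For the two vertices of $O$ placed in the special faces, I observe that the right (resp.\ left) special face of $P$ is the right (resp.\ left) lateral face of every non-special vertex on the corresponding outer path, and of no others; hence these two vertices are a source and a sink, matching $s$ and $t$. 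Condition (F) is then verified by tracing the boundary of each inner face $F$ of $O$: at each non-special vertex $v$ of $P$, the $O$-edge through $v$ splits the star of $v$ into two half-discs holding the outgoing and incoming $P$-edges respectively, and walking along $\partial F$ while alternating $O$-edges with corners at vertices of $O$ yields a decomposition of $\partial F$ into two oriented paths sharing source and sink. An Euler-type check confirms the face count.

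For part (ii), set $P' = \phi(\psi(P))$. The vertex set of $P'$ is in bijection with the edges of $\tO$, which in turn identifies with $V(P)$ by the construction of $\psi$ (the augmenting edges $\ell$ and $r$ correspond to $s_P$ and $t_P$, and each remaining edge of $\tO$ to the unique non-special vertex of $P$ it passes through). Now fix a $P$-edge $e\colon v \to v'$. A local analysis at $v$ and $v'$ shows that $e$ lies inside a unique inner face $F$ of $\tO$, with the $O$-edges through $v$ and $v'$ sitting on $\Pl(F)$ and $\Pr(F)$ respectively. By $\phi$'s insertion rule, the edge $v \to v'$ is reproduced inside $F$ precisely when $v$'s $O$-edge is the topleft edge of $F$ or $v'$'s $O$-edge is the bottomright edge of $F$. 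Suppose both fail. Then $\Pl(F)$ extends past $v$'s $O$-edge towards $t_F$, and translating via $\psi$'s local description yields a next-clockwise outgoing $P$-edge $e_1$ at $v$ after $e$; symmetrically one obtains a next-clockwise incoming $P$-edge $e_3$ at $v'$ after $e$. The triple $(e_1, e, e_3)$ is then an $N$-pattern with central edge $e$, contradicting the hypothesis. Hence every edge of $P$ appears in $P'$, and the identity $|E(P')| = |E(P)|$ (which follows from the Euler-type computation in the proof of Lemma~\ref{lem:prop_phi1}) forces $P' = P$.

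The main obstacle is the local case analysis establishing (F) in part (i), together with, in part (ii), the precise translation between the ``$\Pl(F)$ continues past $v$'s $O$-edge'' property on the $\tO$ side and the existence of a clockwise-next outgoing $P$-edge $e_1$ at $v$ on the $P$ side (and the analogous statement at $v'$).
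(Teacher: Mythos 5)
Your part (i) gets condition (V) right: the identification of the edges of $O$ at a vertex $w$ with the lateral vertices of the corresponding inner face $f$ of $P$ (left lateral vertices giving outgoing edges, right lateral vertices ingoing ones) is correct and also identifies the poles. But you never actually establish acyclicity; you route it through condition (F), whose verification you only assert (``walking along $\partial F$ \dots yields a decomposition into two oriented paths'', ``an Euler-type check confirms the face count''). Describing the boundary of a face of $O$ amounts to knowing which $P$-edges it encloses, which is essentially the content of part (ii), so there is no cheap local argument here. The paper avoids (F) altogether: an oriented path in $O$ from $v$ to $v'$ forces $f$ to precede $f'$ in the left-to-right order on the faces of $P$, which gives acyclicity in one line, and bipolarity then follows from exactly the degree counts you already have. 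You should substitute that argument for your (F) step.

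The serious gap is in part (ii). Your key claim --- if $e\colon v\to v'$ lies in a face $F$ of $\tO$ with $v$ not on the topleft edge and $v'$ not on the bottomright edge, then $e$ is the central edge of an $N$-pattern --- is false as a local statement. First, ``$\Pl(F)$ extends past $v$'s $O$-edge'' does not produce a second outgoing $P$-edge at $v$: the outgoing edges of $v$ are whatever $P$ says they are, independently of the length of $\Pl(F)$. Second, and decisively, take three pairwise disjoint $P$-edges $a_1b_1$, $a_2b_2$, $a_3b_3$ inside $F$ (the $a_i$ consecutive on $\Pl(F)$, the $b_j$ on $\Pr(F)$): the middle edge is neither of the required types, yet no $N$-pattern occurs, so your contradiction never materializes. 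Such ``parallel'' configurations are excluded only by a global count, which is the heart of the paper's proof: the number of $P$-edges inside $F$ equals $\mathrm{deg}(F)-1-p(F)$, where $p(F)$ is the number of parallel consecutive pairs, and comparing $\sum_F\bigl(\mathrm{deg}(F)-1-p(F)\bigr)=|E(P)|$ with the value of $|E(P)|$ given by the Euler relation forces $p(F)=0$ for every $F$. Only then do the edges inside $F$ form a monotone staircase, and only then does $N$-avoidance force the staircase to be the one produced by $\phi$ (and even then a given non-$\phi$ edge need not itself be an $N$-center; some edge of the face is). Your closing cardinality argument is correct but cannot replace this step, since it is invoked only after the unproved inclusion $E(P)\subseteq E(\phi(O))$.
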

\begin{proof}
Let $v,v'$ be two vertices of $O$, and let $f,f'$ be the faces of $P$
corresponding respectively to $v$ and $v'$.
Clearly, if there is an oriented path
of edges of $O$ going from $v$ to $v'$, then $f$  is smaller than $f'$ for the left-to-right order on the faces
of $P$. Hence, $O$ is acyclic. In addition, each vertex $v\neq\{s,t\}$ of $O$ has indegree 
equal to the number of left lateral vertices and outdegree equal to the number of right lateral vertices of the corresponding inner face of $P$.
Hence, $s$ is the only source and $t$ the only sink of $O$, so $O$ is bipolar.

Now assume that $P$ is $N$-avoiding, 
and let us now show that $\phi(O)=P$. Recall that 
the first step of $\phi$ is to add two edges from $s$ to $t$,
yielding an augmented map $\tO$.
 Consider the oriented map $M$ obtained by superimposing $\tO$ and $P$, that is, 
 there is a 
vertex of $P$ in the middle of each edge of $\tO$ and a vertex of $\tO$ inside each face of $P$,
see Figure~\ref{fig:phi1}(b). 
Call $P$-vertices and $P$-edges the vertices and edges of $M$ that come from $P$,
and call $O$-vertices and $O$-edges the vertices and edges of $M$ that come
from $\tO$. Note that a $P$-edge corresponds to an edge of $P$, but an $O$-edge
corresponds to a half-edge of $\tO$.

Let $v$ be a $P$-vertex. 
By definition of $\psi$ (and by Property (V) of plane bipolar orientations), 
the edges of $M$ incident to $v$ form in clockwise order: 
a nonempty interval of outgoing $P$-edges, one ingoing
$O$-edge, a nonempty interval of ingoing $P$-edges, and one outgoing $O$-edge. 
Hence any $P$-edge  inside an inner face $f$ of $\tO$ must go from a $P$-vertex in the left
lateral path of $f$ to a $P$-vertex in the right lateral path of $f$.
Thus the $P$-edges  inside $f$ are naturally ordered from down to top; in this order, two 
consecutive edges are said to be \emph{meeting} if they share a vertex and \emph{parallel} otherwise.
By a simple counting argument, the number of $P$-edges 
inside $f$ is $\mathrm{deg}(f)-1-p(f)$,
where $p(f)$ is the number of parallel consecutive $P$-edges inside $f$ and $\mathrm{def}(f)$ is the degree of $f$. As in the proof of Lemma~\ref{lem:prop_phi1}, write $V(M)$, $E(M)$, and $F(M)$ for the sets of vertices, edges, and inner faces of a map $M$.
From the above discussion, the total number of $P$-edges satisfies
\begin{equation}\label{eq:P}
|E(P)|=\sum_{f\in F(\tO)}\mathrm{deg}(f)-1-p(f)=2|E(\tO)|-2-|F(\tO)|-\sum_{f\in F(\tO)}p(f).
\end{equation}
Moreover, $|E(P)|=|V(P)|+|F(P)|-1$ by the Euler relation, $|V(P)|=|E(\tO)|$ by definition
of $\phi$, and $|F(P)|=|V(\tO)|-2$ by definition of $\psi$. Hence $|E(P)|=|E(\tO)|+|V(\tO)|-3$, which is equal to $2|E(\tO)|-|F(\tO)|-2$ again by the Euler relation.
Hence, from the first expression~\eqref{eq:P} of $|E(P)|$ obtained above, we conclude that
  $\sum_{f\in F(\tO)}p(f)=0$, so all the 
$p(f)$ are zero, so that all consecutive $P$-edges in any inner face of $\tO$ are meeting.
As $P$ has no $N$-pattern and all consecutive edges are meeting, the only possible configuration
for the $P$-edges inside a face of $\tO$ is the configuration shown in Figure~\ref{fig:local}(a). In other 
words, the $P$-edges are those inserted when applying the mapping $\phi$
to $O$. Hence $\phi(O)=P$.
\end{proof}

From Lemma~\ref{lem:psi1g} and Lemma~\ref{lem:psi1d} we obtain our first bijection:
\begin{prop}\label{theo:phi1_bij}
For $n\geq 1$ and $i\geq 0$, 
the  mapping $\phi$ is a bijection between plane bipolar orientations with $n$ edges
and $i$ non-special vertices, and
$N$-avoiding plane bipolar posets with $n$ non-special vertices and $i$ inner faces. 
The inverse mapping of $\phi$ is $\psi$.
\end{prop}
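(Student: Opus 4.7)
The plan is to assemble Proposition~\ref{theo:phi1_bij} from the three preceding lemmas, which already do essentially all of the structural work, and then to pin down the parameter correspondence.

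First I would argue that $\phi$ lands in the right target. By Lemma~\ref{lem:prop_phi1}, for any plane bipolar orientation $O$ the image $P=\phi(O)$ is a plane bipolar poset. The $N$-avoiding property is also established in Lemma~\ref{lem:prop_phi1}: each inner face of $P$ is of the form $f_v$ for a non-special vertex $v$ of $O$, and the local configuration around $v$ shown in Figure~\ref{fig:local}(b) excludes any $N$-pattern inside $f_v$; since two edges of $P$ sharing a vertex necessarily lie in a common inner face of $\widetilde{O}$, no $N$-pattern of $P$ can straddle two such faces.

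Next I would dispose of the bijectivity. Lemma~\ref{lem:psi1g} gives $\psi\circ\phi=\mathrm{Id}$ on the whole domain of plane bipolar orientations. Conversely, Lemma~\ref{lem:psi1d} shows that for any $N$-avoiding plane bipolar poset $P$ the map $O=\psi(P)$ is a plane bipolar orientation satisfying $\phi(O)=P$. Together these two facts yield that $\phi$ and $\psi$ are mutual inverses, the former on the domain of plane bipolar orientations and the latter restricted to the $N$-avoiding plane bipolar posets.

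It remains to verify that the parameters match in the way stated. If $O$ has $n$ edges, then the augmented orientation $\widetilde{O}$ has $n+2$ edges, so by the very definition of $\phi$ the poset $P$ has $n+2$ vertices, two of which are the poles $s_P$ and $t_P$; hence $P$ has $n$ non-special vertices. The correspondence between the number $i$ of non-special vertices of $O$ and the number of inner faces of $P$ is exactly the content of the claim proved inside Lemma~\ref{lem:prop_phi1} (via the Euler relation applied to $\widetilde{O}$ and $P$). Combining the two quantitative statements with the bijection above gives the proposition.

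The only potentially delicate point is making sure that $\psi$ really is well-defined and inverse to $\phi$ on the $N$-avoiding stratum — but this is precisely where Lemma~\ref{lem:psi1d} is used, and no further counting or case analysis is needed, so the proof is essentially a one-line assembly of Lemmas~\ref{lem:prop_phi1},~\ref{lem:psi1g}, and~\ref{lem:psi1d} together with the vertex-count bookkeeping above.
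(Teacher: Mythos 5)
Your proposal is correct and follows exactly the route the paper takes: the paper derives the proposition directly from Lemmas~\ref{lem:psi1g} and~\ref{lem:psi1d} (with Lemma~\ref{lem:prop_phi1} supplying both the fact that $\phi$ lands in the $N$-avoiding posets and, via its internal claim, the parameter correspondence). Your extra bookkeeping on the vertex and face counts is precisely what the paper leaves implicit, so nothing is missing.
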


\begin{remark}
As recalled in Section~\ref{sec:plane_bip}, any bipolar orientation $O$ on a graph $G=(V,E)$ (planar or not) 
gives rise to a partial order $P=\beta(O)$ on $E$, called edge-poset,
according to the precedence order
of the edges along oriented paths (observe that no embedding is needed to define this poset). 
It is well known that the mapping $\beta$ is indeed
a bijection between bipolar orientations on graphs and  
so called \emph{$N$-free posets}, which are posets with no induced $N$ in the Hasse diagram~\cite{HaMo87}, that is, no quadruple $(v_1,v_2,v_3,v_4)$ of vertices such that
the only adjacencies on these vertices are: an edge from $v_1$ to $v_3$, an edge
from $v_2$ to $v_4$ and an edge from $v_1$ to $v_4$. 
Moreover, it is possible to enrich this construction so as to take a planar embedding into account:
the planar embedding of the plane bipolar orientation turns to a so-called 2-realizer 
of the associated poset~\cite{FrOs96}.
   
Our mapping $\phi$ shares some resemblance
with this classical construction, 
but there are significant differences. First, the bipolar poset obtained 
from $\phi$   might 
have induced $N$ in the form of \rN-patterns (due to the embedding, an induced $N$
can appear either as an $N$-pattern or as an \rN-pattern).  
Second, our bijection has the nice feature that 
it operates directly on the embedding, in both directions ($\phi$ and $\psi$). 
The mapping $\phi$ is actually just a simple
adaptation of $\beta$ so as to take the embedding into account.
In contrast, $\phi^{-1}=\psi$ is not just an adaptation of $\beta^{-1}$. Indeed $\psi$
is significantly simpler, it operates on the embedding in a local way, 
whereas $\beta^{-1}$  requires some non-local 
 manipulations on the
edge-poset.
\end{remark}

\subsection{From $N$-avoiding plane bipolar posets to $N$-avoiding transversal structures.}\label{sec:phi2}
The next step is to complete an $N$-avoiding plane bipolar poset into a 
transversal structure. The procedure, called $\phi'$, is very similar to $\phi$. 
Precisely, given a plane bipolar poset $P$ ($N$-avoiding or not)
the associated transversal structure $X=\phi'(P)$ is defined as follows,  
see Figure~\ref{fig:phi1}(c)-(d):
\begin{itemize}
\item
\emph{Create the outer quadrangle.}
Insert one vertex, denoted $W$, in the left special face of $P$ and one vertex, denoted $E$,
in the right special face of $P$. Connect $W$ and $E$ to the source and sink of $P$, denoted respectively by $S$ and $N$, thus creating an outer quadrangle with vertices 
$(W,N,E,S)$ in clockwise order. 
The bipolar poset augmented from $P$ by insertion of the quadrangle ---edges of the quadrangle
oriented from $S$ toward $N$--- is denoted $\tP$. 
\item
\emph{Insert the blue edges.}
For each inner face $f$ of $\tP$, insert (in a planar way) blue edges inside $f$ 
so as to create the following adjacencies, see Figure~\ref{fig:local}(c):
left lateral vertices of $f$ are connected to the bottomright lateral vertex of $f$, and 
right lateral vertices of $f$ are connected to the topleft lateral vertex of $f$.
The inserted blue edges inside $f$ are directed from the left lateral vertices to the right lateral vertices of $f$.
In other words, $f$ is triangulated by transversal blue edges in the unique way avoiding
blue $N$-patterns inside~$f$.
\end{itemize}

\begin{lemma}\label{lem:psi2g}
Let $P$ be a plane bipolar poset. Then $\phi'(P)$ is a transversal 
structure whose red bipolar poset is $P$ and whose blue bipolar poset is $N$-avoiding.
\end{lemma}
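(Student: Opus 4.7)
The plan is to verify three claims in turn: that $X:=\phi'(P)$ is a transversal structure, that its red bipolar poset coincides with $P$, and that its blue bipolar poset has no $N$-pattern.

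For the first claim, I would begin by observing that the underlying map of $X$ is a triangulation of the $4$-gon. By Fact~\ref{fact:poset} each inner face of $P$ has both lateral paths of length at least $2$, and the two ``extra'' faces of $\tP$ created by inserting $W$ and $E$ share this property because the absence of transitive edges in $P$ (with at least one non-special vertex providing an alternative $S$-to-$N$ path) forces both outer paths of $P$ to have length at least $2$. Hence the prescribed fan triangulation of each inner face of $\tP$ genuinely produces triangles. To check T1 at a non-special vertex $v$ of $P$, the key observation is that among the faces of $\tP$ around $v$, the right lateral face of $v$ is the unique face in which $v$ is a left lateral vertex, the left lateral face of $v$ is the unique face in which $v$ is a right lateral vertex, and in the remaining faces $v$ is source or sink. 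Consequently the outgoing blue edges at $v$ lie entirely in the right lateral face of $v$, fill the angular sector between the last outgoing red and the first ingoing red edge, and are nonempty because that face has at least one right lateral vertex; the ingoing blue edges are analyzed symmetrically. Combined with Property~(V) on $P$, this yields the four nonempty clockwise intervals required by T1. For T2, the vertices $N$ and $S$ are sink (resp.\ source) of every face containing them, hence are never lateral vertices and carry no blue edge; $W$ is the unique left lateral vertex (and topleft) of its single surrounding face in $\tP$, and $E$ is the unique right lateral vertex (and bottomright) of its own, so every blue edge at $W$ is outgoing and every blue edge at $E$ is ingoing. By the classical characterisation~\cite{Fu07b}, any orientation and bicoloration satisfying T1 and T2 on a triangulation of the $4$-gon forces the triangulation to be irreducible, so $X$ is a transversal structure.

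Next, the red edges of $X$ are by construction exactly the edges of $P$ and $V(X)\setminus\{W,E\}=V(P)$, so the red bipolar poset of $X$ equals $P$. By Fact~\ref{fact:trans_poset} the blue bipolar poset $\Tb$ is therefore a plane bipolar poset, and it remains only to show that $\Tb$ has no $N$-pattern. Consider a hypothetical blue $N$-pattern $(e_1,e_2,e_3)$ with $e_1,e_2$ both outgoing from $v$ and $e_2,e_3$ both ingoing at $v'$, each of $e_1,e_3$ following $e_2$ clockwise at its respective endpoint. Since outgoing blue edges at $v$ lie only in the right lateral face of $v$, ingoing blue edges at $v'$ lie only in the left lateral face of $v'$, and each blue edge belongs to a unique face of $\tP$, the shared edge $e_2$ forces the right lateral face of $v$ to coincide with the left lateral face of $v'$; call this common face $f$. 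Having two distinct outgoing blue edges at $v$ inside $f$ forces $v$ to be the topleft lateral vertex of $f$, and symmetrically $v'$ is the bottomright lateral vertex of $f$; hence $e_2$ is the unique diagonal between topleft and bottomright in~$f$.

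The main obstacle, which I would address most carefully, is to rule out the existence of $e_1$. A local clockwise analysis at the topleft of $f$ shows that the outgoing blue edges issued from it, in clockwise order, begin with the edge to the right lateral vertex closest to $t_f$ and end with $e_2$ itself (which goes to the bottomright); hence $e_2$ is the clockwise-latest outgoing blue edge at the topleft in the whole blue subgraph. The very next blue edge clockwise at the topleft is then an ingoing blue edge lying in the left lateral face of the topleft, which is a face of $\tP$ distinct from $f$. Consequently no outgoing blue edge $e_1$ immediately follows $e_2$ clockwise at the topleft, contradicting the $N$-pattern and finishing the proof.
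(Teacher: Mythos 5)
Your proposal is correct and follows essentially the same route as the paper: the paper asserts conditions (T1)--(T2) and the identification of the red poset with $P$ as immediate from the construction, and then proves $N$-avoidance of the blue poset by exactly your two steps (any blue $N$-pattern must sit inside a single face of $\tP$, and the fan triangulation of a face admits no blue $N$-pattern). You have merely filled in the local clockwise verifications that the paper leaves as ``clearly''.
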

\begin{proof}
Clearly $X:=\phi'(P)$ is a transversal structure (conditions (T1) and (T2) are satisfied) whose
red bipolar poset is $P$. The blue bipolar poset is $N$-avoiding due to the two 
following observations:
 i) two blue edges with the same origin are inside the same face of $P$, and two blue edges 
with the same end are inside the same face of $P$, hence any blue $N$-pattern must be  inside a face of $P$;
ii) there is no blue $N$-pattern inside a face of $P$. 
\end{proof}
Call $\psi'$ the mapping that associates to a transversal structure its red bipolar poset.

\begin{prop}\label{prop:phi2_bij}
For $n\geq 1$ and $i\geq 0$, 
the mapping $\phi'$ is a bijection between $N$-avoiding plane bipolar posets with $n$ 
non-special vertices and $i$ inner faces,
and $N$-avoiding transversal structures with $n$ inner vertices and $n+i+1$ red edges. 
The inverse mapping of $\phi'$ is
$\psi'$.
\end{prop}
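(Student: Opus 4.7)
The plan has two parts: first, use Lemma~\ref{lem:psi2g} to verify that $\phi'$ lands in the stated codomain with the correct parameters and that $\psi'$ is a left inverse; second, show that $\psi'$ is also a right inverse of $\phi'$ when restricted to $N$-avoiding structures.

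The first part follows almost directly from Lemma~\ref{lem:psi2g}: that lemma gives, for any plane bipolar poset $P$, that $\phi'(P)$ is a transversal structure whose red bipolar poset is $P$ and whose blue bipolar poset is $N$-avoiding. If $P$ is assumed $N$-avoiding then $\phi'(P)$ is $N$-avoiding on both colours, so $\phi'$ lands in $N$-avoiding transversal structures, and since $\psi'$ extracts the red bipolar poset we have $\psi'\circ\phi'=\mathrm{id}$, whence $\phi'$ is injective. The size count is a short Euler computation: if $P$ has $n$ non-special vertices and $i$ inner faces, then $|V(P)|=n+2$ and $|F(P)|=i+1$, so $|E(P)|=n+i+1$; these are exactly the red edges of $\phi'(P)$, and the $n$ inner vertices of $\phi'(P)$ are exactly the non-special vertices of $P$.

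For the second part, let $X$ be an $N$-avoiding transversal structure and set $P=\psi'(X)$. By Fact~\ref{fact:trans_poset} and the $N$-avoiding hypothesis, $P$ is an $N$-avoiding plane bipolar poset. The augmented map $\tP$ used by $\phi'$ can be identified with the submap of $X$ consisting of all red edges together with the four outer edges: the vertices $W$ and $E$ inserted by $\phi'$ in the left and right special faces of $P$ are precisely the two outer vertices of $X$ distinct from the poles $S$ and $N$, and the four outer edges $SW,WN,NE,ES$ are the outer edges of $X$. Thus $X$ and $\phi'(P)$ agree on $\tP$ and it only remains to match the blue edges inside each inner face $f$ of $\tP$.

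This last step is the main obstacle, and is the blue-coloured analogue of the argument used in Lemma~\ref{lem:psi1d}. Condition (T1), applied exactly as in the proof of Fact~\ref{fact:trans_poset}, forces every blue edge of $X$ lying inside $f$ to go from a left lateral vertex of $f$ to a right lateral vertex of $f$. Since $X$ is a triangulation, these blue edges triangulate $f$, so any two consecutive such edges (in the bottom-to-top order induced by $f$'s geometry) must share a vertex, else the strip between them would give a non-triangular inner face of $X$. The $N$-avoiding hypothesis on the blue bipolar poset then rules out every such triangulation except the ``double fan'' produced by $\phi'$ (joining every left lateral vertex to the bottom-right lateral vertex and every right lateral vertex to the top-left lateral vertex): any other triangulation would contain a blue edge $e_2$ together with two further blue edges $e_1,e_3$ forming a blue $N$-pattern at its endpoints. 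This yields $\phi'(\psi'(X))=X$ for every $N$-avoiding transversal structure $X$, which combined with the first part completes the proof.
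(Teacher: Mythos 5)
Your proof is correct and follows essentially the same route as the paper: Lemma~\ref{lem:psi2g} gives the left inverse, the parameter count is the same Euler-relation computation, and the right-inverse step is the paper's argument that the blue edges inside each face of the red poset must go from left to right lateral vertices and that the only $N$-avoiding triangulating configuration is the double fan produced by $\phi'$. You merely supply a bit more detail than the paper at the ``easily checked'' step (using that $X$ is a triangulation to force consecutive blue edges to meet), which is a welcome elaboration rather than a deviation.
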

\begin{proof}
According to Lemma~\ref{lem:psi2g}, $\psi'(\phi'(P))=P$
for any plane bipolar poset $P$, so $\psi'$ is the left inverse of $\phi'$. 
Let us show that $\psi'$ is also the right inverse of $\phi'$
when the mappings are restricted to $N$-avoiding structures. Let $X$ be an $N$-avoiding 
transversal structure, with $\Tr:=\psi'(X)$ its red bipolar poset and $\Tb$ its blue bipolar poset. 
When applying $\phi'$ to $\Tr$, each inner face $f$ of $\Tr$ is triangulated by transversal blue edges going from a left lateral vertex
to a right lateral vertex of $f$. It is easily checked that the only such $N$-avoiding configuration
is the one where all edges are incident either to the topleft or to the bottomright lateral vertex of $f$.
In other words, the blue edges inside $f$ are placed according to the insertion process of $\phi'$, hence $\phi'(P)=X$.  Finally, the parameter-correspondence is due to the Euler relation, which 
ensures that a plane bipolar poset with $n$ non-special vertices and $i$ inner faces has $n+i+1$
edges. 
\end{proof}

To conclude, we have described a bijection $\Phi=\phi'\circ\phi$ between
plane bipolar orientations with $n\geq 1$ edges and $i+2$ vertices
and $N$-avoiding transversal structures with $n$ inner vertices and $n+i+1$
red edges. The bijection $\Phi$ operates in two steps $\phi$ and $\phi'$,
the intermediate combinatorial structures being the $N$-avoiding plane bipolar
posets. 

\subsection{Counting Baxter families}\label{sec:baxter}
Plane bipolar orientations with $n$ edges and $i$ non-special vertices are known to be 
counted by the coefficients
\begin{equation}
\Theta_{n,i}:=\frac{2}{n(n+1)^2}\binom{n+1}{i}\binom{n+1}{i+1}\binom{n+1}{i+2}.
\end{equation}
The formula has first been proved by Rodney Baxter~\cite{baxter}, guessing the answer
from a recurrence satisfied by the counting sequence; 
a direct computation based on the 
``obstinate Kernel method''  
is due to Bousquet-M\'elou~\cite{bousquet-melou-four}. 
A bijective proof has been found recently~\cite{FuPoSc09}. 
Summing over $i$ the coefficients $\Theta_{n,i}$, the number of plane bipolar orientations with $n$ 
edges is 
$$\theta_n:=\sum_i\frac{2}{n(n+1)^2}\binom{n+1}{i}\binom{n+1}{i+1}\binom{n+1}{i+2}.$$
The numbers $\theta_n$, called the \emph{Baxter numbers} (after another Baxter, Glen Baxter, whose name is given to a family of permutations counted by $\theta_n$),  
surface recurrently in the enumeration of combinatorial structures, see~\cite{FeFuNoOr07} 
for a recent survey
on ``Baxter families''. 
Our bijections with $N$-avoiding structures, as described in Section~\ref{sec:bij_N_avoid}, bring new Baxter families to the surface:

\begin{prop}\label{prop:Theta}
For $n\geq 1$ and $i\geq 0$, the number $\Theta_{n,i}$ counts:
\begin{itemize}
\item
$N$-avoiding plane bipolar posets with $n$ non-special vertices and $i$ inner faces,
\item
$N$-avoiding transversal structures with $n$ inner vertices and $n+i+1$ red edges.
\end{itemize}
Taking into account the parameter $n$ only, these structures are counted by the Baxter number 
$\theta_n$.
\end{prop}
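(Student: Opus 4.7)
The plan is essentially bookkeeping: the proposition will follow immediately by feeding the Baxter enumeration of plane bipolar orientations through the two bijections already established in Section~\ref{sec:bij_N_avoid}, with no further combinatorial argument needed.

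First I would recall the input recalled at the beginning of Section~\ref{sec:baxter}: by the result of R.~Baxter~\cite{baxter} (alternatively \cite{bousquet-melou-four,FuPoSc09}), plane bipolar orientations with $n$ edges and $i$ non-special vertices are counted by $\Theta_{n,i}$. Proposition~\ref{theo:phi1_bij} then asserts that $\phi$ is a bijection carrying such an object to an $N$-avoiding plane bipolar poset with $n$ non-special vertices and $i$ inner faces; since the two parameters match exactly as stated there, the first item follows directly.

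For the second item, I would compose with $\phi'$ from Proposition~\ref{prop:phi2_bij}: an $N$-avoiding plane bipolar poset with $n$ non-special vertices and $i$ inner faces is sent to an $N$-avoiding transversal structure with $n$ inner vertices and $n+i+1$ red edges (the shift by $n+i+1$ comes from the Euler relation already used in the proof of Proposition~\ref{prop:phi2_bij}, which counts edges of a plane bipolar poset as non-special-vertices plus inner-faces plus one). Hence $\Phi=\phi'\circ\phi$ transports the count $\Theta_{n,i}$ to the second family.

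Finally, for the last sentence, I would simply sum over $i \geq 0$: by definition
\[
\theta_n \;=\; \sum_{i\geq 0} \Theta_{n,i},
\]
so, forgetting the secondary parameter, both families are enumerated by the Baxter number $\theta_n$. There is no real obstacle here since all the structural work is carried out in Propositions~\ref{theo:phi1_bij} and~\ref{prop:phi2_bij}; the only point worth double-checking is the matching of parameters in the two bijective statements, which is immediate from their formulations.
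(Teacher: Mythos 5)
Your proposal is correct and matches the paper's (implicit) argument exactly: the proposition is stated as an immediate consequence of the known Baxter enumeration of plane bipolar orientations combined with the parameter-preserving bijections of Propositions~\ref{theo:phi1_bij} and~\ref{prop:phi2_bij}, followed by summation over $i$ for the final sentence. No further argument is needed, and your parameter bookkeeping is accurate.
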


\section{Bijection between non-separable maps and irreducible triangulations}\label{sec:bij_2conn_irr}

\subsection{Non-separable maps as specific plane bipolar orientations}\label{sec:2conn_spec}
For rooted maps, plane bipolar orientations are always assumed to have the root edge going
from the source to the sink.
As already mentioned in Section~\ref{sec:plane_bip},
 bipolar orientations are naturally associated with the property
of non-separability; a rooted map is non-separable if and only if it admits a plane bipolar orientation. 
Even more is true, namely,  each rooted non-separable map can be endowed with a specific 
plane bipolar orientation in a canonical way. Given a plane bipolar orientation, we define a 
\emph{left-oriented piece}, shortly a LOP, as a 4-tuple $(v_1,v_2,f_1,f_2)$ made of two distinct 
vertices $v_1,v_2$
and two distinct faces $f_1,f_2$ of $O$ such that the following conditions are satisfied, 
see Figure~\ref{fig:LOP}(a):
\begin{itemize}
\item
the vertex $v_1$ is the sink of $f_2$ and is a left lateral vertex of $f_1$, 
\item
the vertex $v_2$ is a right lateral vertex of $f_2$ and is the source of $f_1$.
\end{itemize} 
Note that $v_1$ and $v_2$ must be non-special vertices and $f_1$ and $f_2$
must be inner faces.

\begin{fact}[\cite{DeOss,Oss}]\label{fact:2conn}
A rooted map is non-separable if and only if it admits a bipolar orientations. 
A rooted non-separable map $M$ has a unique plane bipolar orientation with no LOP, called 
the \emph{minimal plane bipolar orientation} of $M$.
\end{fact}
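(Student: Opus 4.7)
The plan is to handle the two assertions separately.

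For the equivalence \emph{non-separable iff admits a plane bipolar orientation}, I would invoke the classical Lempel--Even--Cederbaum criterion already recalled in Section~\ref{sec:plane_bip}: a graph $G$ with marked vertices $s,t$ admits a bipolar orientation with source $s$ and sink $t$ if and only if $G+st$ is non-separable. Applied to a rooted map $M$ with $s$ the root vertex and $t$ the head of the root edge, the root edge is already present, so $M$ is non-separable iff it admits such an $s$-$t$ bipolar orientation. The planarity requirement that $s$ and $t$ lie on the outer face is automatic from the rooting convention (the root corner is incident to the outer face).

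For existence and uniqueness of the minimal plane bipolar orientation, the natural strategy is to introduce a \emph{LOP-flip} on the set $\cB(M)$ of plane bipolar orientations of a fixed rooted map $M$, then show that the cover relation given by flips turns $\cB(M)$ into a finite distributive lattice whose bottom element is characterized by the absence of any LOP. Given $O\in\cB(M)$ with a LOP $(v_1,v_2,f_1,f_2)$, note that the left lateral path of $f_1$ from $v_2$ to $v_1$ and the right lateral path of $f_2$ from $v_2$ to $v_1$ are two oriented paths between the same endpoints, together bounding a topological region that contains both $f_1$ and $f_2$ (together with the vertices along these paths). The LOP-flip swaps the roles of these two paths, reorienting the edges of the configuration so as to yield a new orientation $O'$. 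A case analysis around each vertex and face in the flipped region shows that $O'$ is still acyclic with unique source $s$ and sink $t$, and that Conditions (V) and (F) continue to hold, so $O'\in\cB(M)$.

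The final ingredient is that the transitive closure of LOP-flips is a finite distributive lattice. I would encode each plane bipolar orientation as an $\alpha$-orientation of $M$ (where $\alpha$ prescribes the outdegree of each inner vertex matching $O$) and invoke Felsner's general theorem that $\alpha$-orientations of a planar map form a distributive lattice under elementary cycle flips; the check is then that LOP-flips correspond exactly to the minimal cycle flips within the bipolar subclass. The minimum element is characterized by the impossibility of any flip, i.e.\ by the absence of a LOP, giving both existence and uniqueness. The main obstacle is precisely this identification of LOP-flips with the atomic flips of Felsner's framework; as an alternative one could bypass the general machinery by exhibiting a direct monovariant (for instance a lex-statistic on the ordering of face sources) that is strictly decreased by every LOP-flip, then proving confluence of the flip system by local diamond arguments on pairs of commuting LOPs, which is more elementary but considerably more tedious.
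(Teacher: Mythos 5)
First, a point of comparison: the paper does not prove Fact~\ref{fact:2conn} at all; it is imported verbatim from \cite{DeOss,Oss}, so there is no internal argument to measure yours against. Your treatment of the first sentence is fine: the criterion recalled in Section~\ref{sec:plane_bip}, applied with $s,t$ the two endpoints of the root edge (which already joins them, and which lies on the outer face so that the poles are outer vertices), gives the equivalence between non-separability and the existence of a plane bipolar orientation with the root going from source to sink.

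The second sentence is where there is a genuine gap. Your proposed encoding of a plane bipolar orientation of $M$ as an $\alpha$-orientation of $M$ itself, ``where $\alpha$ prescribes the outdegree of each inner vertex matching $O$'', does not set up Felsner's theorem: the outdegree sequence is \emph{not} an invariant of the set $\cB(M)$ of plane bipolar orientations of a fixed rooted map (a non-pole vertex of degree $d$ can have any outdegree between $1$ and $d-1$ in different bipolar orientations of the same map), so each $O$ determines its own $\alpha$ and the distributive-lattice theorem for a \emph{fixed} $\alpha$ says nothing about $\cB(M)$ as a whole. Making this route work requires passing to an auxiliary derived map (the angular or primal--dual completion used in \cite{Oss} and in Felsner's treatment), on which the bipolar orientations of $M$ do correspond to the $\alpha$-orientations for a single fixed $\alpha$; that transfer, together with the identification of LOP-flips with the atomic essential-cycle flips there, is precisely the content of the cited results and is the part you defer as ``the main obstacle''. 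Your fallback via a monovariant and local confluence also rests on the unproved claim that a LOP-flip returns a valid plane bipolar orientation: you have not specified which edges are reoriented (the region bounded by the two directed paths from $v_2$ to $v_1$ need not consist of $f_1$ and $f_2$ alone), and acyclicity and Conditions (V)/(F) are only asserted ``by a case analysis''. As written, existence and uniqueness of the minimal plane bipolar orientation are not established; either cite \cite{Oss} as the paper does, or carry out the reduction to the derived map in full.
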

The terminology is due to the fact that the set of plane bipolar orientations of a fixed rooted non-separable map is a distributive lattice, the minimal element of the lattice being the bipolar orientation with no LOP,
see~\cite{Oss}. The set of plane bipolar orientations with no LOP is denoted
$\Omin$.


\begin{lemma}[link with plane bipolar posets]\label{lem:LOP_phi1}
Let $O$ be a plane bipolar orientation, and let $P=\phi(O)$ be the associated $N$-avoiding plane bipolar poset. Then $O$ has no LOP if and only if $P$ has no LOP.
\end{lemma}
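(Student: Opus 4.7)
The approach is to set up an explicit bijection between the LOPs of $O$ and those of $P=\phi(O)$, matching them quadruple to quadruple. The starting observation is that the LOP conditions rigidly pin down the faces involved. In any plane bipolar orientation (or poset), Property~(V) implies that a non-special vertex $v$ is a left lateral vertex of exactly one face, namely its right lateral face $\mathrm{RLF}(v)$, and a right lateral vertex of exactly one face, namely its left lateral face $\mathrm{LLF}(v)$. Hence ``$v_1$ left lateral of $f_1$'' forces $f_1=\mathrm{RLF}(v_1)$, and ``$v_2$ right lateral of $f_2$'' forces $f_2=\mathrm{LLF}(v_2)$; an LOP is therefore encoded by the pair $(v_1,v_2)$ of non-special vertices subject to the equations $v_1=t_{\mathrm{LLF}(v_2)}$ and $v_2=s_{\mathrm{RLF}(v_1)}$. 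The same parametrisation applies to LOPs of $P$.

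The key technical ingredient is a dictionary, read off from the construction of $\phi$, that describes how the inner face $f_v$ of $P$ associated with a non-special vertex $v$ of $O$ sits relative to the $\widetilde{O}$-structure around $v$: the source $s_{f_v}$ is the $P$-vertex inserted on the topleft edge of $\mathrm{LLF}(v)$, the sink $t_{f_v}$ is the $P$-vertex inserted on the bottomright edge of $\mathrm{RLF}(v)$, and the left (respectively right) lateral vertices of $f_v$ are the $P$-vertices on the outgoing (resp.\ ingoing) $O$-edges of $v$. This follows by a local inspection of the $P$-edges inserted inside each of the $\widetilde{O}$-faces around $v$ --- the ``upper'' and ``lower'' faces of $v$, together with $\mathrm{LLF}(v)$ and $\mathrm{RLF}(v)$ --- exactly as suggested by Figure~\ref{fig:local}(b); the two lateral faces of $v$ contribute $s_{f_v}$ and $t_{f_v}$, while the other faces contribute the lateral vertices.

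The bijection is then immediate. To an $O$-LOP $(v_1,v_2,f_1,f_2)$, I associate the $P$-quadruple $(w_1,w_2,f_{v_2},f_{v_1})$, where $w_1$ is the $P$-vertex on the bottomright edge of $f_1=\mathrm{RLF}(v_1)$ and $w_2$ is the $P$-vertex on the topleft edge of $f_2=\mathrm{LLF}(v_2)$. By the dictionary, $w_1=t_{f_{v_1}}$; moreover the bottomright edge of $f_1$ is outgoing at $v_2=s_{f_1}$, so $w_1$ is also a left lateral vertex of $f_{v_2}$. Symmetrically $w_2=s_{f_{v_2}}$, and $w_2$ is a right lateral vertex of $f_{v_1}$, so all four conditions for a $P$-LOP are satisfied. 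Reading the same dictionary backwards gives the inverse map: a $P$-LOP $(w_1,w_2,g_1,g_2)$ identifies non-special $O$-vertices $v_2,v_1$ by writing $g_1=f_{v_2}$, $g_2=f_{v_1}$, and then the faces $f_1=\mathrm{RLF}(v_1)$, $f_2=\mathrm{LLF}(v_2)$ satisfy the $O$-LOP conditions by running the same computation in reverse. The main obstacle is establishing the dictionary; once that is in place, the bijection between $O$-LOPs and $P$-LOPs, and hence the lemma, follow at once.
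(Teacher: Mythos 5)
Your proof is correct and follows essentially the same route as the paper: both arguments translate LOPs back and forth through the local vertex-of-$O$ / inner-face-of-$P$ correspondence of Lemma~\ref{lem:prop_phi1} (Figure~\ref{fig:local}(b)), and your ``dictionary'' ($s_{f_v}$ on the topleft edge of the left lateral face of $v$, $t_{f_v}$ on the bottomright edge of the right lateral face of $v$, lateral vertices of $f_v$ on the out/in edges of $v$) is exactly what the paper reads off that figure. In the forward direction your placement of $w_1$ on the bottomright edge of $f_1$ and $w_2$ on the topleft edge of $f_2$ is the correct one (the paper's text swaps $f_1$ and $f_2$ there, evidently a typo, since $t_{f_{v_1}}$ must lie on the bottomright edge of the face of which $v_1$ is a \emph{left} lateral vertex).

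The one point you pass over is the distinctness requirements built into the definition of a LOP: you never check that $f_1=\mathrm{RLF}(v_1)$ and $f_2=\mathrm{LLF}(v_2)$ are distinct faces (nor that $w_1\neq w_2$, $v_1\neq v_2$). This is precisely where the paper does its only nontrivial work in the converse direction: it rules out $f_1=f_2$ by showing that this coincidence would force $w_2$ and $w_1$ to be adjacent in $P$, so that the LOP of $P$ degenerates to an $N$-pattern, contradicting the $N$-avoidance of $\phi(O)$. Within your parametrisation the check is in fact immediate and needs no appeal to $N$-avoidance --- if $\mathrm{RLF}(v_1)=\mathrm{LLF}(v_2)=f$ then your identity $v_1=t_{\mathrm{LLF}(v_2)}$ makes $v_1$ simultaneously the sink and a left lateral vertex of $f$, which is impossible since lateral vertices exclude $s_f$ and $t_f$ (and faces of a non-separable map are simple cycles) --- but you should say so explicitly, since omitting it leaves unverified exactly the clause of the LOP definition that the paper's proof is organised around.
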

\begin{proof}
Assume that $O$ has a LOP $(v_1,v_2,f_1,f_2)$. Let $w_1$ 
be the vertex of $P$ inserted  in the bottomright edge of $f_2$ and $w_2$ 
the vertex of $P$ in the topleft edge of $f_1$. Let $f_1'$ be the face of $P$
corresponding to $v_2$ and $f_2'$ the face of $P$ corresponding to $v_1$. 
 Then $(w_1,w_2,f_1',f_2')$ is easily checked to be a LOP
of $P$, as shown in Figure~\ref{fig:LOP}(b).  

Conversely, assume that $P$ has a LOP $(w_1,w_2,f_1'=f_{v_1},f_2'=f_{v_2})$, with $v_1$ and $v_2$ the vertices of $O$ associated respectively 
to the two faces $f_2'$ and $f_1'$ of $P$. 
Let $f_1$ be the face of $O$ containing the ingoing edges of $w_1$, and
let $f_2$ be the face of $O$ containing the outgoing edges 
of $w_2$. 
Let us first check that $f_1$ and $f_2$ can not be equal. Assume $f_1=f_2=f$,
then $w_1$ is in the right lateral path and $w_2$ is in the left lateral path of $f$;
since $P=\phi(O)$ and since there is a path from $w_2$ to $w_1$ (due to the
LOP of $P$), the vertices $w_2$ and $w_1$ must be adjacent, so the LOP of $P$
is in fact reduced to an $N$-pattern, a contradiction. Thus $f_1$ and $f_2$ are 
distinct. 
From the definition of $\psi$ and the fact that $w_2$ and $w_1$ are not
adjacent, it is easily checked that $(v_1,v_2,f_1,f_2)$ is a LOP of $O=\psi(P)$,
see Figure~\ref{fig:LOP}(b).   
\end{proof}

\begin{figure}
\begin{center}
\includegraphics[width=16cm]{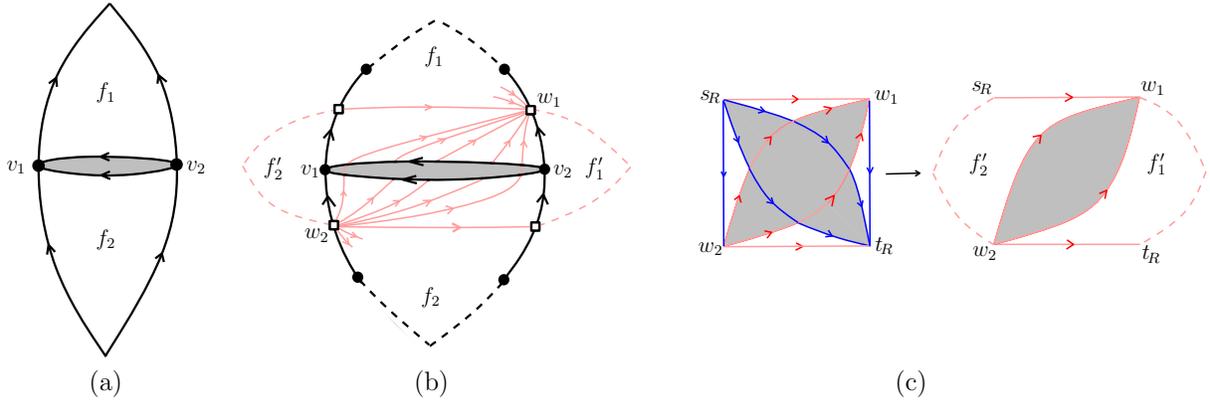}
\end{center}
\caption{(a) A left-oriented piece (LOP) of a plane bipolar orientation. (b) Each LOP in a plane bipolar orientation $O$ yields a LOP in $P=\phi(O)$.
(c) Each right alternating 4-cycle in an $N$-avoiding transversal structure $X$ yields a LOP in 
the red bipolar poset of $X$.}
\label{fig:LOP}
\end{figure}

Let $\Pmin$ be the set of plane bipolar posets with no LOP.
Note that an $N$-pattern---with $e$ its central edge---in 
a plane bipolar poset induces a LOP 
$(v_1,v_2,f_1,f_2)$
where $v_1$ is the end of $e$, $v_2$ is the origin of $e$, $f_1$ is the face on the right of $e$, and $f_2$ is the face on the left of $e$. Hence $\Pmin$ is also the set
of $N$-avoiding plane bipolar posets with no LOP, which by
Lemma~\ref{lem:LOP_phi1} is in bijection with $\Omin$. Thus we obtain 
\begin{equation}
\Omin\simeq_{\phi}\Pmin.
\end{equation}

\subsection{Irreducible triangulations as specific transversal structures}\label{sec:irr_tri_spec}
As we recall here from~\cite{Fu07b}, one can endow an 
irreducible triangulation with a specific transversal structure in a canonical way.
Given a transversal structure $X$, an \emph{alternating 4-cycle} of $X$ is a cycle of four edges that 
alternate in color, that is, the cycle has two opposite red edges and two opposite blue edges. 
Given $v$ a vertex of $R$, let $e$ and $e'$ be the two edges of $R$ incident to $v$,
such that $e'$ follows $e$ in a counterclockwise walk around $R$; $e$ is called the \emph{left-edge}  of $v$
and $e$ is called the \emph{right-edge} of $v$. A boundary incidence of $R$ is the incidence
of an edge $e$ inside $R$ with a vertex $v$ on $R$; it is called a \emph{left incidence} if 
$e$ has the color of the left-edge of $v$ and a \emph{right incidence} if $e$ has the color
of the right-edge of $v$. 

As proved in~\cite{Fu07b}, $R$ is of two possible types: either all boundary incidences for $R$ incidences
are left incidences, in which case $R$ is called a \emph{left alternating 4-cycle}, or all boundary incidences of $R$ are right incidences, in which case $R$ is called a \emph{right alternating 4-cycle}.  

Let us give a slightly different formulation. By Condition (T1), the cycle $R$
is made of two oriented paths of length 2 with the same origin $s_R$ and the same
end $t_R$, which are called the \emph{source} and the \emph{sink} of $R$.
Let $X_R$ be $X$ restricted to $R$ and its interior, forgetting the colors and
directions of the edges of $R$. Then, if $R$ has at least one interior vertex, $X_R$
is a transversal structure. If $s_R$ is source of red edges, $R$ is a left
alternating 4-cycle; if $R$ is source of blue edges, $R$ is a right alternating 4-cycle.
In the degenerated case where $R$ has no vertex inside---thus it has a unique edge $e$ inside--- consider the triple formed by $e$ and the two edges of $R$ of the same color as $e$. Then either the triple forms an $\rN$-pattern, in which case $R$
is a left alternating 4-cycle, or the triple forms an $N$-pattern, in which case $R$
is a right alternating 4-cycle.

\begin{fact}[\cite{Fu07b}]\label{fact:irr_minimal}
A triangulation of the 4-gon is irreducible if and only if it admits a transversal structure. An irreducible triangulation admits a unique transversal structure with no right alternating 
4-cycle, called its \emph{minimal} transversal structure.
\end{fact}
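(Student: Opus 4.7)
\smallbreak\noindent\textbf{Proof plan.}
The fact has three parts: necessity of irreducibility, sufficiency, and existence/uniqueness of a minimal transversal structure. For necessity, I would use a direct obstruction argument. Suppose $T$ has a transversal structure $X$ and a separating $3$-cycle $C=(a,b,c)$. At each vertex of $C$, Condition~(T1) partitions the incident edges into four homogeneous color/orientation intervals, and this partition, read from the inside and the outside of $C$ separately, imposes strong parity constraints on the inner edges on each side of $C$. A summation of these local constraints over the three corners of $C$ collides with the global count coming from (T1) and (T2): the four distinct boundary types present at $N,E,S,W$ cannot all be reconciled using only three corners, yielding a contradiction.

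For sufficiency, I would proceed by induction on the number $n$ of inner vertices of $T$. The base case $n=0$ pins $T$ down as the quadrangle with a single diagonal; (T1) applied at the two triangular inner faces, together with (T2), forces a unique valid coloring and orientation of that diagonal. For the inductive step I would locate an inner vertex $v$ whose neighborhood admits a ``deflation'' producing a strictly smaller irreducible triangulation $T'$; a vertex of minimum degree works because irreducibility combined with the Euler relation excludes inner vertices of degree~$3$ as soon as $n\geq 2$. Induction provides a transversal structure $X'$ on $T'$, which I lift to $X$ on $T$ by coloring and orienting the re-introduced edges at $v$ in the pattern forced by (T1). The main obstacle is ensuring that a deflation preserving irreducibility is always available and that the lift is consistent with (T1) across all configurations; this reduces to a finite case analysis on the local picture around $v$, and for configurations where a direct deflation fails one can fall back on a Schnyder-wood-like construction to produce $X$ globally.

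For the minimal transversal structure, I would introduce a flip operation: given a right alternating $4$-cycle $R$ in $X$, swapping colors and reversing orientations of the edges strictly inside $R$ (and adjusting the diagonals of $R$ accordingly) transforms $R$ into a left alternating $4$-cycle while preserving validity of the transversal structure. A well-chosen integer monovariant---for instance, a weighted count of vertices lying in right alternating $4$-cycles---strictly decreases under each flip, so iterated flipping from any transversal structure of $T$ terminates at one with no right alternating $4$-cycle. The set of transversal structures of $T$ equipped with the flip relation then carries a distributive-lattice structure (by the same line of argument that proves the lattice structure for bipolar orientations mentioned after Fact~\ref{fact:2conn}), and the unique minimum of this lattice is precisely the claimed flip-minimal one, giving both existence and uniqueness.
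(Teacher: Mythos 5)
The paper offers no proof of this statement: it is imported with a citation to~\cite{Fu07b}, so your attempt can only be measured against the arguments in that reference, and against those it has genuine gaps in all three parts. The most serious is the existence direction, which is the hard core of the fact. Excluding inner vertices of degree~$3$ is easy (the neighbourhood of such a vertex is a separating $3$-cycle, so no Euler relation is needed), but it does not produce a vertex admitting a ``deflation'' that keeps the triangulation irreducible, and you concede this by deferring the bad configurations to a ``finite case analysis'' and an unspecified ``Schnyder-wood-like construction''. That deferral is exactly the missing proof: the known arguments (He's regular edge-labellings, or the iterative construction in~\cite{Fu07b}) hinge on a nontrivial lemma guaranteeing a suitable contractible edge, or on a global sweep with a carefully maintained invariant; neither reduces to inspecting the local picture around a minimum-degree vertex. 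The necessity direction is likewise only a gesture: the ``parity constraints'' and the ``summation over the three corners of $C$'' are never defined, and the correct obstruction is not about reconciling the boundary types of $N,E,S,W$ at $C$ but about the structure induced in the closed interior of $C$, where every interior vertex must carry all four edge types while only three boundary vertices are available to absorb the four mandatory monochromatic oriented paths (or, equivalently, an Euler-relation count fails). The intuition is right; the proof is not there.

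In the minimality part your overall strategy (flip right alternating $4$-cycles, exhibit a decreasing potential, invoke the distributive lattice) is the correct one and matches~\cite{Fu07b}, but the flip you define is wrong. Swapping colors \emph{and} reversing orientations of all edges interior to $R$ sends the clockwise pattern (out-red, out-blue, in-red, in-blue) required by (T1) at a vertex interior to $R$ to the cyclic pattern (out-red, in-blue, in-red, out-blue), which violates (T1) at every such vertex; adjusting edges of $R$ itself cannot repair this. The correct operation is a quarter turn: one color class is recolored keeping its orientation and the other is recolored with its orientation reversed, which shifts the roles of the four poles of $R$ by one step and turns a right alternating $4$-cycle into a left one while preserving (T1). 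The termination monovariant is also asserted rather than exhibited. As written, none of the three assertions of the Fact is actually established.
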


Again the terminology refers to the fact that the set of transversal structures of an irreducible 
triangulation is a distributive lattice whose minimal element is the transversal structure with
no right alternating 4-cycle, see~\cite{Fu07b}. 

\vspace{0.2cm}


\begin{lemma}[link with plane bipolar posets]\label{lem:LOP_phi2}
Let $X$ be an $N$-avoiding transversal structure, and let $\Tr$ be the red bipolar poset
 of $X$. Then $X$ has no right alternating 4-cycle if and only if $\Tr$ has no LOP.
\end{lemma}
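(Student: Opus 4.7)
The plan is to prove the lemma by setting up an explicit bijective correspondence between right alternating 4-cycles of $X$ and LOPs of $\Tr$. Following the notation of Section~\ref{sec:irr_tri_spec}, I will label a right alternating 4-cycle $R$ so that its vertices in clockwise order are $s_R, a, t_R, b$, with cycle edges $s_R \to a$ red, $a \to t_R$ blue, $b \to t_R$ red, $s_R \to b$ blue; inside $R$, the transversal substructure $X_R$ then has $s_R = W$, $a = N$, $t_R = E$, $b = S$.

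For the direction $(\Rightarrow)$, I would first dispatch the degenerate case: if $R$ has no interior vertex, the unique interior edge together with the two cycle edges of the same color forms an $N$-pattern of $\Tr$ or of $\Tb$, contradicting the $N$-avoidance hypothesis; hence $X_R$ has at least one interior vertex. I would then introduce the two faces $f$ and $f'$ of $\Tr$ that contain the blue edges $a \to t_R$ and $s_R \to b$ respectively, and prove that $(v_1,v_2,f_1,f_2) = (a,b,f,f')$ is a LOP. The two conditions ``$a$ is left lateral of $f$'' and ``$b$ is right lateral of $f'$'' are immediate by inspection of the clockwise cycle around $a$ (resp.\ $b$) in $X$: $f$ is the right lateral face of $a$ in $\Tr$ (it fills the outgoing-blue wedge at $a$) and $f'$ is the left lateral face of $b$ (it fills the ingoing-blue wedge at $b$). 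The heart of the argument is the other two conditions, namely that $a$ is the sink of $f'$ and $b$ is the source of $f$. For the former, I would identify the interior-of-$R$ portion of $f'$ as the outer-left region of $X_R$'s red bipolar poset, bounded by the leftmost red path from $b$ to $a$; then at $a$ the two ingoing red edges $s_R \to a$ (the cycle edge) and $u' \to a$ (where $u'$ is the last vertex of that leftmost red path) both border $f'$, forcing $a = t_{f'}$. The source condition for $b$ follows symmetrically using the rightmost red path from $b$ to $a$.

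For the direction $(\Leftarrow)$, I would start from a LOP $(v_1,v_2,f_1,f_2)$ and read off the 4-cycle. Let $u_2$ be the topleft lateral vertex of $f_2$ (the vertex on $\Pl(f_2)$ immediately before $v_1 = t_{f_2}$) and let $u_1$ be the bottomright lateral vertex of $f_1$ (on $\Pr(f_1)$ immediately after $v_2 = s_{f_1}$). The 4-cycle $R$ with vertices $u_2, v_1, u_1, v_2$ has edges: the red topleft edge $u_2 \to v_1$ of $f_2$, the red bottomright edge $v_2 \to u_1$ of $f_1$, together with the two blue edges $u_2 \to v_2$ (inserted inside $f_2$ by $\phi'$ as the topleft-to-right-lateral edge) and $v_1 \to u_1$ (inserted inside $f_1$ as the left-lateral-to-bottomright edge). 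Colors alternate around $R$ by construction. Simplicity of $R$ reduces to verifying $u_2 \neq u_1$: the opposite identification would produce two edges (one red, one blue) between the common vertex and $v_1$, violating the no-multiple-edges condition on the triangulation $X$. Finally, $R$ is right (not left) alternating because, at the source $u_2$, its interior wedge sits inside $f_2$ between the outgoing-red cycle edge $u_2 \to v_1$ and the outgoing-blue cycle edge $u_2 \to v_2$; in clockwise order from $u_2 \to v_1$ to $u_2 \to v_2$ this wedge contains only outgoing-blue edges (namely the remaining blue edges of the fan emitted by $u_2$ inside $f_2$), so $u_2 = W$ of $X_R$.

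The main obstacle is the verification in the forward direction that $a$ is the sink of $f'$ (and symmetrically that $b$ is the source of $f$). This demands a careful description of the clockwise local structure at $a$ in $X$, to locate the interior-of-$R$ wedge inside the ingoing-red interval and identify its two boundary edges as $s_R \to a$ and an interior edge $u' \to a$. Producing this interior edge in turn requires the subsidiary claim that, in an irreducible triangulation of the 4-gon with at least one interior vertex, the outer corner $N = a$ must have at least one interior red neighbor---a fact that ultimately rests on the prohibition of separating 3-cycles.
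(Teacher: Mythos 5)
Your proposal follows essentially the same route as the paper's proof: the same correspondence (in the forward direction, the two faces of $\Tr$ containing the two blue cycle edges; in the converse, the topleft/bottomright lateral vertices together with the blue edges guaranteed by $X=\phi'(\Tr)$, which is where $N$-avoidance enters), the same use of $N$-avoidance to dismiss the degenerate cycle with no interior vertex, and the same blue-source criterion to identify the cycle as right alternating. The only difference is one of granularity: you spell out the sink/source verifications (via the leftmost and rightmost red paths inside the cycle) and the simplicity check that the paper compresses into ``easily checked, see Figure~\ref{fig:LOP}(c)'', so this is a correct, slightly more explicit rendering of the same argument.
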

\begin{proof}
Assume that $X$ has a right alternating 4-cycle $R$, the vertices of $R$ in clockwise order
being denoted $(s_R,w_1,t_R,w_2)$, with $s_R$ the source and $t_R$ the sink of $R$.
Let $f_1'$  
be the face of $\Tr$ containing the blue edge $(w_1,t_R)$, and let $f_2'$
be the face of $\Tr$ containing the blue edge $(s_R,w_2)$.  
As discussed at the end of the definition of right alternating 4-cycles, a right alternating
4-cycle with no vertex inside yields an $N$-pattern.
Hence, since $X$ is $N$-avoiding, $R$ has at least
one vertex inside, so the faces $f_1'$ and $f_2'$ are distinct.
It is easily checked that $(w_1,w_2,f_1',f_2')$ is a LOP, as shown in Figure~\ref{fig:LOP}(c).
Conversely assume that $\Tr$ has a LOP $(w_1,w_2,f_1',f_2')$. Let $s_R$ be the topleft lateral vertex of $f_2'$ , and let $t_R$ be the  bottomright lateral vertex of $f_1'$. Since $X$ is $N$-avoiding, $X$ is 
obtained from its red bipolar poset as $X=\phi'(\Tr)$. By definition of $\phi'$ (see Figure~\ref{fig:local}(c)),
in $X$ there is a blue edge from $s_R$ to $w_2$ and a blue edge from $w_1$
to $t_R$.  
Hence, the 4-cycle $R=(s_R,w_1,t_R,w_2)$ is an alternating 4-cycle. As there are 
red edges inside $R$ incident to $w_1$ and $w_2$, $s_R$ is source of
blue edges, so $R$ is a right alternating 4-cycle.  
\end{proof}

Let $\Tmin$ be the set of transversal structures with no right alternating 4-cycle
(minimal transversal structures). Note that an $N$-pattern in a transversal structure
yields a right alternating 4-cycle: the cycle delimited by the faces
on both sides of the central edge of the $N$-pattern. Hence $\Tmin$ is also 
the set of $N$-avoiding transversal structures with no right alternating 4-cycle, which
by Lemma~\ref{lem:LOP_phi2} is in bijection with $\Pmin$. Thus we obtain:
\begin{equation}
\Pmin\simeq_{\phi'}\Tmin.
\end{equation}

\subsection{The bijection}\label{sec:the_bij_F1}
To conclude, we have
$$
\Omin\simeq_{\phi}\Pmin\simeq_{\phi'}\Tmin.
$$
Hence $\Phi=\phi'\circ\phi$ specializes into a bijection  
between minimal 
plane bipolar orientations and minimal transversal structures. 
Since minimal plane bipolar orientations identify to (rooted) non-separable maps and
minimal transversal structures identify to (rooted ) irreducible triangulations, we obtain
a bijection, called $F_1$, between these two map families. Precisely, given a rooted non-separable map $M$ with at least 2 edges
$T:=F_1(M)$ is obtained as follows:
\begin{itemize}
\item
endow $M$ with its minimal plane bipolar orientation $O$, the root edge being deleted,
\item
compute the transversal structure $X$ associated to $O$:  $X:=\Phi(O)$,
\item
return $T$ as the irreducible triangulation underlying $X$, rooted at the edge going from 
$N$ to $W$,
\end{itemize}
and $M=F_1^{-1}(T)$ is obtained
as follows:
\begin{itemize}
\item
endow $T$ with its minimal transversal structure,
\item
compute the plane bipolar orientation $O$ associated to $X$: $O:=\psi(\psi'(X))$,
\item
return the rooted non-separable map $M$ underlying $O$ ($M$ receives an additional
root edge going from the source to the sink).
\end{itemize} 

\begin{thm}\label{theo:F1}
For $n\geq 2$, the mapping $F_1$ is a bijection between rooted non-separable maps with $n$ edges and
irreducible triangulations with $n+3$ vertices. 
\end{thm}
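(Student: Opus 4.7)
The plan is to assemble $F_1$ directly from the three chains already established in the paper, so almost all the structural work is done; what remains is to certify that the rooting conventions line up and that the parameter count comes out to $n$ edges $\leftrightarrow$ $n+3$ vertices.

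First I would set up the correspondence on the oriented side. By Fact~\ref{fact:2conn}, a rooted non-separable map $M$ with $n$ edges is equivalent to its minimal plane bipolar orientation; and because plane bipolar orientations of rooted maps are taken with the root oriented from source to sink, removing the root edge gives a canonical element of $\Omin$ with $n-1$ edges. Symmetrically, by Fact~\ref{fact:irr_minimal}, a rooted irreducible triangulation with $n+3$ vertices is equivalent to its minimal transversal structure, whose underlying map is rooted at the outer edge $(N,W)$ (the outer quadrangle being fixed by the transversal structure). So it suffices to produce a bijection between $\Omin$ on maps with $n-1$ edges and $\Tmin$ on triangulations with $n-1$ inner vertices.

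Next I would combine the two bijective chains derived in Lemmas~\ref{lem:LOP_phi1} and~\ref{lem:LOP_phi2}, namely $\Omin \simeq_\phi \Pmin$ and $\Pmin \simeq_{\phi'} \Tmin$, into $\Phi = \phi'\circ\phi$. By Proposition~\ref{theo:phi1_bij}, $\phi$ sends a plane bipolar orientation with $n-1$ edges and $i$ non-special vertices to an $N$-avoiding plane bipolar poset with $n-1$ non-special vertices and $i$ inner faces; by Proposition~\ref{prop:phi2_bij}, $\phi'$ then sends this to an $N$-avoiding transversal structure with $n-1$ inner vertices. Restricting both bijections to their LOP-free (resp.\ right-alternating-4-cycle-free) subclasses, which are preserved by $\phi$ and $\phi'$ thanks to Lemmas~\ref{lem:LOP_phi1} and~\ref{lem:LOP_phi2}, yields exactly the bijection $\Omin \simeq \Tmin$ needed.

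Finally I would verify the parameter count: the output transversal structure has $n-1$ inner vertices plus the four outer vertices $N,E,S,W$, giving an underlying triangulation with $n+3$ vertices, as desired. Tracing the inverse: given a triangulation with $n+3$ vertices rooted at $(N,W)$, Fact~\ref{fact:irr_minimal} attaches its minimal transversal structure, $\psi\circ\psi'$ returns the corresponding element of $\Omin$ with $n-1$ edges, and reattaching the root edge from source to sink recovers the rooted non-separable map with $n$ edges. The main (minor) obstacle is purely bookkeeping: confirming that the root-edge deletion/insertion step is compatible with the definition of $F_1$ given above the theorem and with the convention that the outer edge $(N,W)$ is taken as the root of the output triangulation; both checks are immediate from the definitions of $\phi$ and $\phi'$ around the poles.
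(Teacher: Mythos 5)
Your proposal is correct and follows essentially the same route as the paper, whose proof simply invokes the chain $\Omin\simeq_{\phi}\Pmin\simeq_{\phi'}\Tmin$ established via Lemmas~\ref{lem:LOP_phi1} and~\ref{lem:LOP_phi2} together with the parameter counts of Propositions~\ref{theo:phi1_bij} and~\ref{prop:phi2_bij}. The only point you leave implicit (and which the paper states when defining $\Pmin$ and $\Tmin$) is that minimal structures are automatically $N$-avoiding, since an $N$-pattern forces a LOP (resp.\ a right alternating 4-cycle); this is needed so that the restriction of $\phi'$ is surjective onto $\Tmin$, but it is a one-line observation already recorded in the text you cite.
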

\begin{proof}
The bijection results from $\Omin\simeq\Tmin$, as discussed above.  
The parameter-correspondence is inherited from the parameter-correspondences of 
$\phi$ and $\phi'$.
\end{proof}


\subsection{Counting non-separable maps}\label{sec:map_families}
Brown and Tutte~\cite{BT64} used the recursive approach introduced by Tutte to show  that the number of rooted non-separable maps
with $n+1$ edges and $i+2$ vertices is
\begin{equation}
\Lambda_{n,i}=\frac{(n+i)!(2n-i-1)!}{(i+1)!(n-i)!(2i+1)!(2n-2i-1)!}.
\end{equation} 
And the number of rooted non-separable with $n+1$ edges is $\lambda_n=\sum_{i=0}^{n-1}\Lambda_{n,i}$, which simplifies to
\begin{equation}
\lambda_n=\frac{2(3n)!}{(n+1)!(2n+1)!}.
\end{equation}
Bijective proofs have been given later on, the first one being a correspondence
with ternary trees bearing labels so as to satisfy a positivity condition~\cite{JaSc98}.
A more direct construction has been described by Schaeffer in his PhD~\cite{S-these}, as a 
 4-to-$(2n+2)$ correspondence between ternary trees with $n$ nodes and 
rooted non-separable maps with $n+1$ edges. The correspondence is based on local ``closure''
operations on a ternary tree, with the effect of closing one by one the faces of the associated map.

Our bijections relating maps via specific combinatorial structures, as described in Section~\ref{sec:bij_2conn_irr}, yields the following counting results:

\begin{prop}
For $n\geq 1$ and $i\geq 0$, the number $\Lambda_{n,i}$ counts:
\begin{itemize}
\item
rooted non-separable maps with $n+2$ vertices, $i+2$ faces, and whose  minimal plane bipolar orientation (forgetting the root-edge) is a plane bipolar poset,
\item
rooted irreducible triangulations with $n$ inner vertices and whose minimal transversal structure has 
$n+i+1$ red edges. 
\end{itemize}
Taking into account the parameter $n$ only, these map families are counted by $\lambda_n$.
\end{prop}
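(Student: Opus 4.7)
The plan is to deduce both identities from the Brown--Tutte formula recalled at the start of Section~\ref{sec:map_families}: the number of rooted non-separable maps with $n+1$ edges and $i+2$ vertices is $\Lambda_{n,i}$. By Fact~\ref{fact:2conn}, adding (or deleting) the oriented root edge from source to sink puts this family in bijection with the elements of $\Omin$ having $n$ edges and $i$ non-special vertices, so this cardinality is $\Lambda_{n,i}$ as well.

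For the second bullet I would push the family through $\Phi=\phi'\circ\phi$. By Lemma~\ref{lem:prop_phi1}, an element of $\Omin$ with $n$ edges and $i$ non-special vertices maps via $\phi$ to an element of $\Pmin$ with $n$ non-special vertices and $i$ inner faces; by Proposition~\ref{prop:phi2_bij} it then maps through $\phi'$ to an element of $\Tmin$ with $n$ inner vertices and $n+i+1$ red edges. Since Fact~\ref{fact:irr_minimal} identifies $\Tmin$ with the set of rooted irreducible triangulations via their minimal transversal structures, this is exactly the family described in the second bullet.

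For the first bullet I would apply only $\phi$ and then \emph{re-root} the image. Given $O\in\Omin$ with $n$ edges and $i$ non-special vertices, set $P=\phi(O)\in\Pmin$; by Lemma~\ref{lem:prop_phi1}, $P$ has $n+2$ vertices and $i$ inner faces. The crucial observation is that $P$ plays a double role: it is not only an element of $\Pmin$ but also a plane bipolar orientation with no LOP, so it is the minimal plane bipolar orientation of the rooted non-separable map $M_2$ obtained by adding a source-to-sink root edge to $P$ (placed so that the outer face lies on its right). Since $P$ is by definition a plane bipolar poset, $M_2$ satisfies the bipolar-poset condition of the first bullet, and a routine Euler computation yields $V(M_2)=n+2$ and $F(M_2)=i+2$. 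The inverse is immediate: for any map $M_2$ in the family of the first bullet, the minimal plane bipolar orientation forgetting the root-edge is a plane bipolar poset with no LOP, hence (as noted in Section~\ref{sec:2conn_spec}, since every $N$-pattern induces an LOP) automatically $N$-avoiding and therefore in $\Pmin$; applying $\psi$ recovers the corresponding element of $\Omin$.

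The main obstacle, to my mind, is precisely this double-role observation for $P$, which requires cleanly separating the role of $P$ as the output of the bijection $\phi$ from its role as a rootable plane bipolar orientation in its own right. Once this is in place both cardinality identities reduce to Brown and Tutte's count, and summing them over $i$ yields the final assertion $\sum_i \Lambda_{n,i}=\lambda_n$ stated in the proposition.
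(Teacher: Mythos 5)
Your proposal is correct and follows exactly the route the paper intends (the paper states this proposition without a written proof, as an immediate consequence of the chain $\Omin\simeq_{\phi}\Pmin\simeq_{\phi'}\Tmin$ with its parameter correspondences, the identification of minimal structures with their underlying rooted maps, and the Brown--Tutte count $\Lambda_{n,i}$). In particular your ``double-role'' reading of $\Pmin$ --- as both the image of $\Omin$ under $\phi$ and as the set of minimal plane bipolar orientations that happen to be plane bipolar posets, with the Euler computation giving $n+2$ vertices and $i+2$ faces --- is precisely the intended content of the first bullet.
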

\begin{remark}
As for rooted non-separable maps, there exists a simple 4-to-$(2n+2)$ correspondence 
between ternary trees with $n$ nodes and rooted irreducible triangulations with $n$ 
inner vertices~\cite{Fu07b}, again based on ``closure'' operations on the tree.

It turns out that our bijection $F_1$ puts in correspondence a non-separable map $M$ and an irreducible
triangulation $T$ that have the same underlying (unrooted) ternary tree. This is no surprise, as 
the 
ternary tree associated to $M$ ($T$) is a spanning tree that arises from deleting specific edges in the 
minimal bipolar orientation of $M$ (minimal transversal structure of $T$, resp.), whereas $F_1$ 
results from $\phi$ matching minimal plane bipolar orientations with minimal
transversal structures. 
Hence, $F_1$ can be considered as a direct bijective construction on maps that has the same effect
as computing the ternary tree associated to a non-separable map and then closing the tree into
an irreducible triangulation.
\end{remark}

\section{Bijection between loopless maps and triangulations}\label{sec:bij_loopless_triang}

\subsection{Decomposing loopless maps into non-separable components}\label{sec:dec_2conn}
It is well known in graph theory that a connected graph $G$ is decomposed into a  collection of non-separable components called the \emph{blocks}, which are the maximal non-separable subgraphs of $G$~\cite{Ha,Mohar}. Conversely, the process of 
gluing non-separable graphs at common vertices in a tree-like fashion yields any connected graph in a unique way. 
This classical decomposition readily adapts 
to rooted loopless maps~\cite{Tu63,GoJa83}. For our purpose
it proves convenient to define the size $|M|$ of a rooted loopless map as its number of edges including the root. In this section 
we consider the vertex-map---made of a unique vertex and no edge---as a 
rooted loopless map. In contrast, non-separable maps are required to have at least
one edge. 

\begin{fact}\label{fact:dec_2conn}
The following process:
\begin{itemize}
\item
take a rooted non-separable map $C$, and order the corners of $C$ in a canonical way as $\alpha_1,\ldots,\alpha_{2|C|}$; 
\item
take a collection $M_1,\ldots,M_{2|C|}$ of rooted loopless maps; for each $1\leq i\leq 2|C|$, insert
$M_i$ in the corner $\alpha_i$ of $C$, merging the outer face of $M_i$ with the face incident to $\alpha_i$ and the root vertex of $M_i$ with the vertex incident to 
$\alpha_i$,
\end{itemize}
yields a rooted loopless map, and 
each rooted loopless map $M$ is obtained in a unique way from this process.

The map $C$ is called the \emph{core-map} of $M$, 
and the ($2|C|+1$)-tuple $\frak{D}(M)=(C;M_1,\ldots,M_{2|C|})$ is called the
\emph{decomposition-tuple} of $M$. (Note that the size of $M$ is the sum of the sizes of the maps in the decomposition-tuple.)
\end{fact}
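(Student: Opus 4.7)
The plan is to reduce the statement to the classical block decomposition of connected graphs, lifted to the planar embedded and rooted setting. First I would recall that any connected loopless graph decomposes uniquely into its maximal non-separable subgraphs (its blocks), with distinct blocks meeting at most in a single cut-vertex; the block-cut incidence graph is a tree. For a rooted loopless map $M$, the block containing the root edge inherits the root (and the cyclic edge-order coming from the embedding of $M$) to become a rooted non-separable map, the candidate core-map $C$.

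For the forward direction, I would verify that the insertion step is well-defined: each $M_i$ is loopless, so identifying its root vertex with a vertex of $C$ and its outer face with the face incident to $\alpha_i$ creates no loops and respects planarity. The resulting map inherits the rooting of $C$ and is a well-defined rooted loopless map; one then checks that its block containing the root edge is precisely $C$ and that its remaining blocks are exactly the blocks of the $M_i$'s. The size identity $|M|=|C|+\sum_i|M_i|$ is immediate from edge-counting.

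For the inverse direction, given $M$ with core-map $C$, at each corner $\alpha_i$ of $C$ (incident to a vertex $v_i$ and a face $f_i$) I would collect the portion of $M\setminus E(C)$ that is attached to $v_i$ and lies planarly in the angular sector of $\alpha_i$; together with $v_i$ taken as root vertex this forms a rooted loopless sub-map $M_i$ (possibly reduced to the vertex-map if no edges lie in that sector). One then pins down a canonical rooting of $M_i$: for instance, its root edge is the first edge of $M_i$ encountered in the clockwise walk around $v_i$ starting from $\alpha_i$, oriented so that the outer face of $M_i$ lies on its right, in accordance with the paper's rooting convention.

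The main obstacle is showing that decomposition and reconstruction are mutually inverse, which amounts to checking that the chosen rooting convention on $M_i$ matches the corner $\alpha_i$ at which insertion takes place; once this compatibility is fixed, the bijectivity follows from two independent inputs: the uniqueness of the block decomposition, which determines $C$ and the underlying unrooted sub-maps unambiguously, and the fact that the corners of $C$ are in canonical bijection with the hanging sub-maps via the planar embedding. Fixing a canonical ordering $\alpha_1,\ldots,\alpha_{2|C|}$ of the corners of $C$ (for example by the face-walks starting at the root corner) then makes the whole correspondence canonical, completing the proof.
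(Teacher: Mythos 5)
Your argument is correct and is exactly the classical block (cut-vertex) decomposition, lifted to rooted embedded maps, that the paper invokes without proof by citing Tutte and Goulden--Jackson: taking the core to be the block containing the root edge, collecting the hanging pieces corner by corner, and fixing a rooting convention that ties each piece to its corner are the standard ingredients. The one detail worth making explicit is why each hanging component attaches through a \emph{single} corner of $C$: since $C$ is non-separable its faces are bounded by simple cycles, so a vertex of $C$ has at most one corner in each face, and a connected hanging piece, lying inside one face of $C$, can therefore reach its attachment vertex only through that one corner.
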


\subsection{Decomposing triangulations into irreducible components}\label{sec:dec_irr}
Similarly as for loopless maps, there exists a classical decomposition of triangulations into 
components of higher connectivity: each triangulation on the topological sphere is
obtained from a collection of 4-connected triangulations (which are triangulations where all
3-cycles are facial) glued at common triangles in a tree-like fashion. The same idea ---decomposition
at separating 3-cycles--- works as well to decompose rooted triangulations of the 4-gon
into rooted irreducible components~\cite{T62a}. First let us introduce some terminology. Given a rooted
triangulation $T$ of the 4-gon, denote by $N, E, S, W$ the outer vertices of $T$ in clockwise order
around the outer face, with $N$ the root-vertex; 
then $T$ is called \emph{$W\!E$-diagonal} 
if $W$ and $E$ are adjacent, $T$ 
is called \emph{$SN$-diagonal} if $S$ and $N$ are adjacent, and $T$ is called \emph{non-diagonal} otherwise.
Notice that the two diagonal cases are disjoint by planarity of $T$.
The unique rooted irreducible triangulation that is $W\!E$-diagonal  is the quadrangle
$(N,E,S,W)$ split by a diagonal $(W,E)$; this map is 
called the \emph{$W\!E$-link-map}. 
Similarly the unique rooted irreducible triangulation that is $SN$-diagonal  is the quadrangle
$(N,E,S,W)$ split by a diagonal $(S,N)$; this map is called the \emph{$SN$-link-map}.
For our purpose it is
convenient to define the size of a triangulation or of a triangulation of the 4-gon $T$ as $||T||:=(|V(T)|-3)$.
Notice that $||T||$ is the number of inner vertices if $T$ is a triangulation and $2||T||$ is the number
of inner faces if $T$ is a triangulation of the 4-gon, by the Euler relation.

\begin{fact}\label{fact:dec_irr}
The following process:
\begin{itemize}
\item
take a rooted irreducible triangulation $I$, and order the inner faces of $I$ in a canonical way
as $f_1,\ldots,f_{2||I||}$;
\item
take a collection $T_1,\ldots,T_{2||I||}$ of rooted triangulations; 
for each $1\leq i\leq 2||I||$, 
substitute $f_i$ by $T_i$ in a canonical way (e.g., the outer triangle of $T_i$ fits with the contour
of $f_i$ and the root vertex of $T_i$ fits with a distinguished vertex of $f_i$)
\end{itemize}
yields a rooted triangulation of the 4-gon; and
each rooted triangulation $T$ of the 4-gon is obtained in a unique way from this process.

The map $I$ is called the \emph{core-triangulation} of $T$; $T$ is non-diagonal if $I$ has at least
one inner vertex, is $W\!E$-diagonal if $I$ is the $W\!E$-link-map, and is $SN$-diagonal if $I$
is the $SN$-link-map.
The $(2||I||+1)$-tuple $\frak{D}(T)=(I;T_1,\ldots,T_{2||I||})$ is called the \emph{decomposition-tuple} of $T$.
(Note that the size of $T$ is the sum of the sizes of the maps in the decomposition-tuple.) 
\end{fact}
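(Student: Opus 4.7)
The plan is to prove Fact~\ref{fact:dec_irr} by separately establishing the forward direction (the substitution process produces a rooted triangulation of the 4-gon) and the reverse direction (every rooted triangulation of the 4-gon arises from a unique decomposition-tuple). The mechanism in the reverse direction is to identify the core-triangulation $I$ with what remains after collapsing, for each outermost separating 3-cycle of $T$, the interior disk it bounds into a single triangular face.

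For the forward direction, I would check that substituting $T_i$ into $f_i$ is well-defined: since $f_i$ is triangular and the outer face of $T_i$ is triangular, one can identify the outer triangle of $T_i$ with $\partial f_i$ in the unique way compatible with the embedding and with the root-vertex convention. Because both $I$ and $T_i$ have no loops or multiple edges and the gluing only identifies the three boundary edges (which are shared by exactly two faces afterwards), the result has no loops or multiple edges. All inner faces of the resulting map are triangles---either inner faces of some $T_i$, or faces $f_i$ that received the trivial rooted triangulation with $\|T_i\|=0$---and the outer face remains the 4-gon, so the result is a rooted triangulation of the 4-gon.

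For the reverse direction, given $T$, I would call a 3-cycle of $T$ \emph{separating} if its interior disk contains at least one vertex, and \emph{outermost} if maximal under containment of interior disks among separating 3-cycles. By the Jordan curve theorem and planarity, the interior disks of any two outermost separating 3-cycles are disjoint. Defining $I$ by replacing, for each outermost separating 3-cycle $C$, the closed disk bounded by $C$ with a single triangular face bounded by $C$, one gets a triangulation of the 4-gon. It is irreducible: a would-be separating 3-cycle in $I$ would be a 3-cycle of $T$ not strictly inside any outermost 3-cycle, but then by definition of outermost it is itself outermost (hence now facial) or already facial in~$T$. Filling each inner face $f_i$ of $I$ back with the rooted triangulation $T_i$ it originally contained (trivial if $f_i$ was already a face of $T$, the triangulation bounded by the collapsed outermost 3-cycle otherwise) exactly recovers $T$.

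For uniqueness, if $T$ equals the map produced from $(I';T_1',\dots,T_{2\|I'\|}')$, then every 3-cycle of $I'$ is either facial in $T$ or bounds a region whose interior $T_i'$ contains no separating 3-cycle of its own (by irreducibility of $I'$). A short embedding-sensitive argument shows that $I'$ consists precisely of the edges and vertices of $T$ outside the interior disks of outermost separating 3-cycles, so $I'=I$ and then $T_i'=T_i$ face by face; the size identity $\|T\|=\sum_i\|T_i\|$ follows by counting inner vertices. I expect the main obstacle to be fixing the canonical ordering of the inner faces of $I$ and the canonical identification of each $\partial T_i$ with $\partial f_i$ so that the correspondence is truly a bijection on \emph{rooted} objects; this can be resolved by ordering the inner faces of $I$ by a depth-first traversal from the root corner and by using the root vertex of $T_i$ to align its outer triangle inside $f_i$.
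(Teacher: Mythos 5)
The paper does not actually prove Fact~\ref{fact:dec_irr}: it is stated as a classical result and attributed to Tutte's census paper (the citation \cite{T62a}), so there is no internal proof to compare against. Your argument is the standard one that backs up that citation --- collapse the interior of each outermost separating $3$-cycle to a face to get the core, and recurse --- and it is essentially correct. The one point you lean on without justification is the laminarity of the family of interior disks of $3$-cycles: two distinct $3$-cycles of a map with no multiple edges that share two vertices necessarily share the edge between them, and the three internally disjoint $u$--$v$ paths then force their interiors to be nested or disjoint; if they share at most one vertex, planarity alone forces nested-or-disjoint. This laminarity is exactly what makes ``outermost'' well behaved (maximal elements have disjoint interiors) and is also the engine of uniqueness, so it deserves to be stated as a lemma rather than waved at via the Jordan curve theorem.

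Two smaller issues. First, in the uniqueness paragraph the clause ``bounds a region whose interior $T_i'$ contains no separating $3$-cycle of its own (by irreducibility of $I'$)'' is false as written: the components $T_i'$ are arbitrary triangulations and may well contain separating $3$-cycles. What you actually need, and what does follow from planarity, is that every $3$-cycle of $T$ is either a $3$-cycle of $I'$ (hence, by irreducibility, the boundary of an inner face of $I'$) or lies entirely in the closed disk of a single inner face $f_j$ (because every inner edge of $T_j'$ has an endpoint strictly inside $f_j$); from this dichotomy the set of outermost separating $3$-cycles of $T$ is exactly $\{\partial f_i : \|T_i'\|\ge 1\}$, which pins down $I'=I$ and then each $T_i'=T_i$. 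Second, you do not address the last assertion of the Fact (the non-diagonal/$W\!E$-diagonal/$SN$-diagonal trichotomy); it follows because an irreducible triangulation of the $4$-gon containing the edge $W\!E$ (resp.\ $SN$) is forced to be the corresponding link-map, and substitution never creates a new adjacency between two vertices of $I$ since every inserted edge has an endpoint strictly inside a face. With these repairs your write-up is a complete proof of the Fact.
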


As a corollary we can describe the decomposition-tuples corresponding to 
rooted triangulations. 
Indeed, in each size $k\geq 1$, there is a simple bijection between 
rooted triangulations  and rooted triangulations of the 4-gon
that are not $WE$-diagonal. Starting from a rooted triangulation $T$, delete the 
edge following the root in counterclockwise order around the outer face; the obtained
map is a triangulation of the 4-gon, and $W$ and $E$ are not adjacent, since the 
unique edge connecting them in $T$ has been deleted. Conversely, starting from 
a rooted triangulation $\tT$ of the 4-gon that is not $WE$-diagonal, 
add an edge between
$E$ and $W$. Since $W$ and $E$ are not adjacent in $\tT$, the obtained map $T$ 
has no multiple edges (nor loops) and has clearly all faces of degree 3, so $T$
is a triangulation.

\begin{cor}\label{coro:triang} 
In each size $k\geq 1$, rooted triangulations  can be identified with 
rooted triangulations of the 4-gon that are not $WE$-diagonal, which
 themselves identify to decomposition-tuples 
whose core-triangulation is not the $WE$-link-map.
\end{cor}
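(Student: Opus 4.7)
The plan is to deploy the explicit constructions already described in the paragraph preceding the statement and to combine them with Fact~\ref{fact:dec_irr}. Write $\delta$ for the map that deletes from a rooted triangulation $T$ the outer edge $e$ following the root in counterclockwise order around the outer face, and $\iota$ for the map that adds the edge $(W,E)$ to a non-$WE$-diagonal rooted triangulation of the 4-gon. The first step is to verify that $\delta$ and $\iota$ land in the claimed targets, are mutually inverse, and preserve the size parameter $k$.

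For $\delta$: denoting the three outer vertices of $T$ in counterclockwise order from the root as $N, W, S$, the edge $e$ is $(W,S)$ and the inner face bordering $e$ is a triangle $(W,E,S)$ with a third vertex $E$ necessarily distinct from $N$ (else $T$ would contain a multi-edge or a loop). Removing $e$ merges these two triangles into a quadrangular face with vertices $(N,E,S,W)$ in clockwise order, keeps every other face triangular, and yields a rooted triangulation of the 4-gon in which $W$ and $E$ are non-adjacent: any edge $(W,E)$ in $\delta(T)$ would already have been present in $T$, contradicting simplicity (the triangle $(W,E,S)$ of $T$ fixes the only incidence $WE$ can have). The inverse $\iota$ is immediate: adding $(W,E)$ to a non-$WE$-diagonal $\tT$ creates no multi-edge, splits the outer 4-gon into the two triangles $(N,W,E)$ and $(W,S,E)$, and gives back a rooted triangulation. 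The compositions $\iota\circ\delta$ and $\delta\circ\iota$ act as the identity by inspection, and the size $k=|V|-3$ is preserved because the vertex set is untouched.

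For the second identification I would invoke Fact~\ref{fact:dec_irr}: a rooted triangulation of the 4-gon $\tT$ corresponds bijectively to a decomposition-tuple $(I; T_1,\ldots,T_{2||I||})$. The substitution process only alters the interiors of the inner faces of the core $I$, so every edge of $\tT$ joining two outer vertices must already be an edge of $I$. Hence $\tT$ is $WE$-diagonal if and only if $I$ contains the edge $(W,E)$, if and only if $I$ is a $WE$-diagonal irreducible triangulation, if and only if $I$ is the $WE$-link-map (the unique such). Combining this equivalence with the bijection $\delta\leftrightarrow\iota$ yields the full chain of identifications claimed in the corollary.

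The main obstacle is bookkeeping at the level of rooting and of the canonical labeling $N,E,S,W$: one must check that under $\delta$ the distinguished root corner of $\tT$ is unambiguously determined, and conversely that under $\iota$ the choice of which of the two new triangles becomes the outer face is forced by the root. Once these orientation conventions are fixed, the remaining verifications are routine, essentially an Euler-relation face count plus the local observation that $\delta$ and $\iota$ undo each other.
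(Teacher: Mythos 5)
Your overall route is the same as the paper's: the paper obtains the first identification by exactly the edge-deletion/edge-addition pair you call $\delta$ and $\iota$, and the second identification is read off from Fact~\ref{fact:dec_irr}. Your treatment of the second identification (no substituted component $T_i$ can create a new edge between two vertices of the core, since $T_i$ is simple and its outer triangle is glued onto a face of the core; hence $\tT$ is $WE$-diagonal if and only if the core contains the diagonal $(W,E)$, if and only if the core is the $WE$-link-map) is correct and in fact slightly more detailed than the paper, which simply records this equivalence inside Fact~\ref{fact:dec_irr}.

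The verification of the first identification, however, does not hold together as written, because your labels are inconsistent with the convention that defines ``$WE$-diagonal''. In the paper's convention the outer vertices of a triangulation of the 4-gon are $N,E,S,W$ in clockwise order starting from the root vertex, so $W$ and $E$ are the \emph{diagonal} (non-consecutive) pair of the outer 4-cycle. Consequently, in $\delta(T)$ the two endpoints of the deleted edge must be the pair that ends up diagonal, i.e.\ they must receive the names $W$ and $E$, and the newly exposed apex must receive the name $S$ (it is the vertex opposite $N$). You instead call the deleted edge $(W,S)$ and name the exposed apex $E$. With those names it is $W$ and $S$ that become non-adjacent, while $W$ and $E$ remain adjacent in $\delta(T)$ via an edge of the exposed triangle $(W,E,S)$, which is not deleted; so the claim ``$W$ and $E$ are non-adjacent in $\delta(T)$'' is false as stated, and your justification is self-contradictory (you exhibit the triangle $(W,E,S)$, hence the edge $WE$, inside $T$, and then assert that $\delta(T)$ has no edge $WE$). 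Taken literally, $\iota$ would then re-add an edge $(W,E)$ parallel to an existing one. The repair is only a relabelling: swap the roles of $E$ and $S$ in your description of $\delta$. After that the non-adjacency argument becomes the paper's one --- the deleted edge is the unique edge of the simple map $T$ joining $W$ and $E$, so they are non-adjacent in $\delta(T)$ and $\iota$ creates no multiple edge. Your remaining checks (the apex is distinct from $N$ for $k\geq 1$, all other faces stay triangular, the vertex set and hence the size are preserved) are correct.
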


\subsection{The bijection}\label{sec:the_bij_F2}
Our size-preserving bijection, called $F_2$, between rooted loopless maps and rooted triangulations
is defined recursively from the bijection $F_1$.
Recall that $F_1$ maps a non-separable map of size at least 2 (at least one non-root
edge) to an irreducible triangulation of the same size (at least one inner vertex).
 It is convenient here to augment the correspondence $F_1$ 
 into a bijection $\tFone$ 
 with one object of size $1$ on each side: the edge-map---made of two vertices connected by an edge---corresponds to the $SN$-link-map. 
 We denote by $\cL$ the family of (rooted) loopless maps including the vertex-map, 
 by $\cL_{\leq n}$
 the set of loopless maps of size at most $n$, and by $\cL_n$ the set
 of loopless maps of size $n$. Similarly, denote by $\cT$ the family of (rooted) triangulations including the triangle, by $\cT_{\leq n}$
 the set of triangulations of size at most $n$, and by $\cT_n$ the set
 of triangulations of size $n$. Finally, denote by $\cN$ the family of (rooted)
 non-separable maps with at least one edge and by $\cI$ the family of
 (rooted) irreducible triangulations with at least one inner vertex. Note that
 $\tFone$ is a size-preserving bijection between $\cN$ and $\cI\cup\{SN$-link-map$\}$. 
 
The size-preserving bijection $F_2$ between $\cL$ and $\cT$ 
is specified recursively as follows. 
First, the vertex-map---the unique rooted loopless map
$M$ such that $|M|=0$---is mapped by $F_2$ to the triangle-map---the unique rooted triangulation
$T$ such that $||T||=0$. Given $n\geq 0$, assume that $F_2$
 is a well defined bijection from $\cL_{\leq n}$ to $\cT_{\leq n}$, 
 that is size-preserving: $||F_2(M)||=|M|$ for every  $M\in\cL_{\leq n}$. 
 
 Let us now extend the bijection $F_2$ to maps of size $n+1$.
 By Fact~\ref{fact:dec_2conn}, 
 a tuple $(C;M_1,\ldots,M_{2|C|})$ corresponds to a loopless
 map in $\cL_{n+1}$ if and only if $C$ is in $\cN$, the $M_i$'s are in $\cL$,
 and $|C|+\sum_i|M_i|=n+1$, in which case all the $M_i$'s must be in $\cL_{\leq n}$.
 Let $\cDL_{n+1}$ be the set of such decomposition-tuples, note that 
 $\cDL_{n+1}\simeq\cL_{n+1}$.
 Similarly, by Fact~\ref{fact:dec_irr} and Corollary~\ref{coro:triang}, 
 a tuple $(I;T_1,\ldots,T_{2||I||})$ corresponds to a triangulation  in $\cT_{n+1}$ if and only if $I$ is in $\cI\cup\{SN\mathrm{-link-map}\}$, the $T_i$'s are in $\cT$,
 and $||I||+\sum_i||T_i||=n+1$, in which case all the $T_i$'s must be in $\cT_{\leq n}$.
 Let $\cDT_{n+1}$ be the set of such decomposition-tuples, note that 
 $\cDT_{n+1}\simeq\cT_{n+1}$.
Since $\tFone$ is a size-preserving bijection between $\cN$ and $\cI\cup\{SN$-link-map$\}$ and since, by induction, $F_2$ is a bijection between $\cL_{\leq n}$
and $\cT_{\leq n}$, we obtain a bijection $\cDL_{n+1}\simeq\cDT_{n+1}$:
for $(C;M_1,\ldots,M_{2|C|})\in\cDL_{n+1}$, the associated tuple in $\cDT_{n+1}$
is $(\tFone(C);F_2(M_1),\ldots,F_2(M_{2|C|}))$. Since $\cDL_{n+1}\simeq\cL_{n+1}$
and $\cDT_{n+1}\simeq\cT_{n+1}$, we conclude that $\cL_{n+1}$ is in bijection
with $\cT_{n+1}$, and we call $F_2$ the bijection extended to $\cL_{\leq n+1}$.

 \begin{thm}\label{theo:F2}
 For $n\geq 0$, the mapping $F_2$ is a bijection between rooted loopless maps with $n$ edges
 and rooted triangulations with $n$ inner vertices. 
  \end{thm}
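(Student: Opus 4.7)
The plan is to prove Theorem~\ref{theo:F2} by induction on $n$, using the recursive definition of $F_2$ that is already set up in Section~\ref{sec:the_bij_F2}. The structural work has in effect already been done by the two decomposition results (Fact~\ref{fact:dec_2conn} and Fact~\ref{fact:dec_irr}/Corollary~\ref{coro:triang}) together with the bijection $\tFone$; what remains is to verify that the recursion closes, that the size parameters line up correctly, and that the base case is consistent.

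For the base case $n=0$, I would just observe that $\cL_0$ consists of the vertex-map alone and $\cT_0$ consists of the triangle alone, and $F_2$ sends one to the other by definition. For the inductive step, assume as inductive hypothesis that $F_2$ restricted to $\cL_{\leq n}$ is a size-preserving bijection onto $\cT_{\leq n}$. I would then rely on the two identifications
\begin{equation*}
\cL_{n+1}\simeq \cDL_{n+1}\quad\text{and}\quad \cT_{n+1}\simeq \cDT_{n+1},
\end{equation*}
given respectively by Fact~\ref{fact:dec_2conn} and by Fact~\ref{fact:dec_irr}+Corollary~\ref{coro:triang}, and show that the map
\begin{equation*}
(C;M_1,\dots,M_{2|C|})\;\longmapsto\;\bigl(\tFone(C);F_2(M_1),\dots,F_2(M_{2|C|})\bigr)
\end{equation*}
is a bijection $\cDL_{n+1}\to\cDT_{n+1}$. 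The key point to verify is that all inputs used on the right are already defined and that the image lies in $\cDT_{n+1}$: since $C\in\cN$ has $|C|\geq 1$, each $M_i$ has size at most $n$, so $F_2(M_i)$ is defined and has size $|M_i|$ by the induction hypothesis; then $||\tFone(C)||+\sum_i||F_2(M_i)||=|C|+\sum_i|M_i|=n+1$, so the image tuple is a genuine element of $\cDT_{n+1}$, and in particular its core lies in $\cI\cup\{SN\text{-link-map}\}$, which is exactly the image of $\cN$ under $\tFone$. Bijectivity then comes for free: the inverse applies $\tFone^{-1}$ to the core and $F_2^{-1}$ (inductively available on $\cT_{\leq n}$) to each component, and the fact that $F_2^{-1}$ is defined on components of size at most $n$ is guaranteed because the core of any element of $\cDT_{n+1}$ has size at least $1$.

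Composing the three bijections $\cL_{n+1}\simeq\cDL_{n+1}\simeq\cDT_{n+1}\simeq\cT_{n+1}$ extends $F_2$ to a size-preserving bijection $\cL_{\leq n+1}\to\cT_{\leq n+1}$ agreeing with its previous definition on $\cL_{\leq n}$, completing the induction. I do not anticipate a substantial obstacle: the only subtle point is that $\tFone$ must really hit the full target $\cI\cup\{SN\text{-link-map}\}$ — in particular the $SN$-link-map must be covered, which is arranged by the ad hoc convention that the edge-map (size $1$) is sent to the $SN$-link-map, while $F_1$ (Theorem~\ref{theo:F1}) handles all of $\cN$ of size $\geq 2$ bijectively with $\cI$. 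Once this dictionary is in place, the induction is essentially formal.
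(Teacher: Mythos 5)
Your proof follows the paper's own argument essentially verbatim: the same induction on size, the same identifications $\cL_{n+1}\simeq\cDL_{n+1}$ and $\cT_{n+1}\simeq\cDT_{n+1}$ via Fact~\ref{fact:dec_2conn} and Fact~\ref{fact:dec_irr} with Corollary~\ref{coro:triang}, and the same use of $\tFone$ (with the edge-map mapped to the $SN$-link-map) to match cores. It is correct and complete as written.
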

 \begin{remark}
 Let us make a few comments on this bijection. Denote by $\cL_{n,k}$ the family of rooted loopless maps with $n$ edges and root-vertex of degree $k$, and
 denote by $\cT_{n,k}$ the family of rooted triangulations with $n$ inner vertices and 
 root-vertex of degree $k+2$. 
 The equality $|\cL_n|=|\cT_n|$ is known for long~\cite{T62a,Tu73},
 as both $|\cL_n|$ and $|\cT_n|$ are equal to 
 \begin{equation}
 a_n:=\frac{2(4n+1)!}{(n+1)!(3n+2)!}.
 \end{equation}
 A first bijective proof of $|\cL_n|=|\cT_n|$, more precisely of $|\cL_{n,k}|=|\cT_{n,k}|$, has been
 found by Wormald~\cite{Wo80}, 
 based on the observation that the generating trees (induced by root-edge-deletion) 
 of $\cL=\cup_{n,k}\cL_{n,k}$
 and of $\cT=\cup_{n,k}\cT_{n,k}$ are isomorphic. The bijection is thus recursive; a rooted loopless
 map is first completely decomposed to find out the place it occupies in the generating
 tree of $\cL$, then the associated rooted triangulation is the one occupying the same place
 in the generating tree of $\cT$.
 
 Our bijection has the original feature of being a mixing of a recursive construction (the parallel recursive decompositions into non-separable/irreducible components) and
  a direct construction (the 
 mapping $F_2$ to match non-separable/irreducible components). 
 The advantage is that, in practice,
 there should be fewer levels of recursion in our bijection than in Wormald's one. Indeed, as shown
 in~\cite{BaFlScSo01}, 
 a typical loopless map is made of a giant non-separable component ---of linear size---
 having small connected components attached in each corner.
 
 Let us also mention that direct bijective proofs of $|\cT_n|=a_n$ have been found recently~\cite{PS03b,FuPoScL},
 again based on closure operations on trees. However, no bijection with trees is 
 known yet to explain $|\cL_n|=a_n$. It would be interesting to find out such a construction
 for loopless maps, and hopefully derive from it a direct (nonrecursive) bijection between
 $\cL_n$ and $\cT_n$. 
 \end{remark}

\vspace{0.2cm}

\noindent\emph{Acknowledgements.} I am very grateful 
to Gilles Schaeffer for his encouragements to find
a new bijection between loopless maps and triangulations. 
I thank the anonymous referee for a very detailed report that
has led to a significant improvement of the presentation of the results. 
This work has also benefited from  interesting discussions with 
Mireille Bousquet-M\'elou, Nicolas Bonichon, and Nick Wormald.

\bibliography{mabiblio.bib}
\bibliographystyle{alpha}
\end{document}